\let\ORIlabel\label
\let\ORIrefstepcounter\refstepcounter
\AddToHook{package/hyperref/before}{
   \let\label\ORIlabel 
   \let\refstepcounter\ORIrefstepcounter}
\documentclass[preprint]{siamart220329}

\usepackage{lipsum}
\usepackage{amsfonts}
\usepackage{graphicx}
\usepackage{mathtools}
\usepackage{algorithmic}
\usepackage{subfigure}
\usepackage{bm}
\usepackage{amssymb,amscd,amsxtra,calc}
\usepackage{cmmib57}
\usepackage{multirow}
\usepackage{mathrsfs}
\usepackage{graphicx}
\usepackage{bm}
\usepackage{hyperref}
\usepackage{amsmath}
\usepackage{amssymb}
\usepackage{ntheorem}
\usepackage{cleveref}
\usepackage{graphicx}
\usepackage{accents}
\usepackage{tikz} 
\usepackage{tikz-cd}
\usepackage{subfigure}
\usepackage{color}
\usepackage{ragged2e,array}
\usepackage{stmaryrd}
\usepackage{comment}
\usepackage{multirow}
\usepackage{tabularray}
\usepackage[font=footnotesize,labelfont=bf]{caption} 
\usepackage[section]{placeins} 
\allowdisplaybreaks[3]
\usepackage{import} 
\usepackage{colonequals}
\usepackage{tabularx}
\usepackage{cite}
\usepackage{todonotes}
\usepackage{subcaption}

\newcommand{\pef}[1]{\todo[inline, color=blue!30]{\textbf{PEF: }#1}}

\usepackage{multirow}
\newsiamthm{remark}{Remark}
\newsiamremark{hypothesis}{Hypothesis}
\crefname{hypothesis}{Hypothesis}{Hypotheses}
\newsiamthm{claim}{Claim}

\newsiamremark{assumption}{Assumption}
\newsiamthm{example}{Example}
\newsiamthm{problem}{Problem}

\renewcommand{\d}{\mathrm{d}}

\renewcommand{\l}{\left}
\renewcommand{\r}{\right}

\newcommand\bA{{\boldsymbol{A}}}
\newcommand\bE{{\boldsymbol{E}}}
\newcommand\bB{{\boldsymbol{B}}}

\newcommand\bC{{\boldsymbol{C}}}
\newcommand\bH{{\boldsymbol{H}}}

\newcommand{\cE}{\mathcal{E}}
\newcommand{\cH}{\mathcal{H}}
\renewcommand{\div}{\mathrm{div}}
\newcommand{\curl}{\mathrm{curl}}

\newcommand{\includegraphicswithlegend}[5][1.0]{%
	\setlength{\unitlength}{#2}%
  \begin{picture}(1,1)%
    \put(0,0){\includegraphics[width=#1\unitlength]{#3}}%
    \put(0.95,0.27){\includegraphics[width=.1\unitlength,height=.65\unitlength]{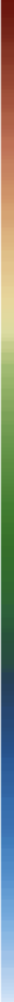}}%
    \put(0.95,0.95){\scriptsize{#4}}%
    \put(0.95,0.17){\scriptsize{#5}}%
  \end{picture}%
}

\headers{Global and local helicity-preservation}{P.~E.~Farrell, M.~He, K.~Hu and G. Zhang}

\title{Global and local helicity-preservation in the finite element discretization of magnetic relaxation \thanks{Submitted to the editors DATE.
\funding{This work was funded by the Engineering and Physical Sciences Research Council [grant number EP/W026163/1],
the Science and Technology Facilities Council [grant number UKRI/ST/B000495/1],
the Donatio Universitatis Carolinae Chair ``Mathematical modelling of multicomponent systems",
the UKRI Digital Research Infrastructure Programme through the Science and Technology Facilities Council's Computational Science Centre for Research Communities (CoSeC),
the Swedish Research Council under grant no.~Z2021-06594 while in residence at Institut Mittag-Leffler in Djursholm, Sweden,
the European Research Council (ERC Starting Grant, project 101164551 GeoFEM), and by
a Royal Society University Research Fellowship (URF$\backslash$R1$\backslash$221398).
For the purpose of open access, the authors have applied a CC BY public copyright licence to any author accepted manuscript arising from this submission. No new data were generated or analysed during this work. }}}

\author{
Patrick E.~Farrell\thanks{Mathematical Institute, University of Oxford, UK
     and
     Mathematical Institute, Faculty of Mathematics and Physics, Charles University, Czechia ({patrick.farrell@maths.ox.ac.uk})}
	\and
	Mingdong He\thanks{Mathematical Institute, University of Oxford, UK ({mingdong.he@maths.ox.ac.uk})}		
	\and Kaibo Hu\thanks{Mathematical Institute, University of Oxford, UK
  ({kaibo.hu@maths.ox.ac.uk})}
	\and Ganghui Zhang\thanks{Mathematical Institute, University of Oxford, UK
  ({ganghui.zhang@maths.ox.ac.uk})}
}

\usepackage{amsopn}

\usepackage{mathrsfs}
\usepackage{amsfonts}
\usepackage{stmaryrd}
\usepackage{amsmath}
\usepackage{amssymb}
\usepackage{multirow}
\usepackage{tabularx}
\usepackage{booktabs}
\usepackage{cite}
\usepackage{verbatim}
\usepackage{color}
\usepackage{graphicx}
\usepackage{subfigure}
\usepackage{epsfig,epstopdf,color}
\allowdisplaybreaks[4]

\begin{document}
\maketitle
\begin{abstract}
Magnetic relaxation drives plasma toward lower-energy equilibria under helicity constraints. In ideal magnetohydrodynamics (MHD), helicity is locally conserved, while resistive theories such as Taylor relaxation preserve only global helicity. This distinction has important implications for structure-preserving numerical methods.
We compare three finite element formulations: an unconstrained scheme that does not conserve helicity, a mixed method based on finite element exterior calculus that preserves discrete local helicity on magnetically closed subdomains, and a Lagrange multiplier approach that enforces only global helicity conservation. Numerical experiments with magnetic knots and braids show that helicity-based constraints provide effective topological barriers when the relevant helicity-type invariant is nonzero, but do not fully characterize braided field-line topology when it vanishes. These results clarify both the strengths and the possible limitations of helicity-based
structure-preserving finite element methods for magnetic relaxation.
\end{abstract}

\begin{keywords}
magnetohydrodynamics,
structure-preservation,
Lagrange multiplier,
finite element exterior calculus,
magnetic helicity,
magnetic relaxation.
\end{keywords}

\begin{MSCcodes}
 65N30, 65L60, 76W05
\end{MSCcodes}

\section{Introduction}
Magnetic relaxation describes the process by which a magnetized plasma reorganizes its magnetic field toward a lower-energy equilibrium. Magnetic relaxation is a fundamental process in plasma physics, playing a central role in the understanding of magnetic equilibria in both natural and laboratory plasmas. In magnetically ideal situations, this reorganization is described by the ideal magnetohydrodynamic (MHD) equations, or their simplification, the magneto-frictional (MF) equations \cite{he2025helicity,taylor1974relaxation,chodura3DCodeMHD1981,yeatesLimitationsMagnetofrictionalRelaxation2022} posed on a bounded, contractible, Lipschitz domain $\Omega \subset \mathbb{R}^3$:
\begin{subequations}\label{eqn:magneto-frictional}
\begin{align}
\partial_t \bm{B} + \nabla\times \bm{E}&=\bm{0},\label{eqn:magnetic-advection}\\
   \bm{E}+ \bm{u}\times\bm{B} &= \bm{0},\label{eqn:mf-Ohms} \\
   \bm{u} &= \tau \bm{j}\times\bm{B},\label{eqn:magneto-frictional3}\\
    \bm{j}& = \nabla\times \bm{B},
\end{align}
\end{subequations}
where $\bm{B}$ is the magnetic field, $\bm{E}$ is the electric field, $\bm{u}$ is the velocity, $\bm{j}$ is the current, and $\tau > 0$ is a coupling parameter.
These MF equations possess the same equilibria as the ideal MHD equations, allowing their study with less computational expense.

Both the ideal MHD and MF systems impose topological constraints of the magnetic fields.
Helicity is a quantitative measure of the knottedness and linking of the magnetic fields, which is defined on a domain $\Omega_s \subset \Omega$ as
\begin{equation}
\cH(\Omega_s) \coloneqq \int_{\Omega_s} \bm A\cdot \bm B\,\d x,
\end{equation}
where $\bm A$ is any magnetic potential satisfying $\nabla\times \bm A=\bm B$. The quantity $\cH(\Omega)$, known as Woltjer's invariant \cite{woltjer1958theorem}, describes the total averaged knotting of the magnetic field in the domain. Hereafter, we refer to this quantity as the {\it global} helicity since it is integrated over the entire domain. Woltjer's invariant is conserved in ideal MHD. In fact, $\cH(\Omega_s)$ is conserved for any {\it magnetically closed domain} $\Omega_{s}\subset \Omega$, i.e., any subdomain $\Omega_{s}$ such that $\bm B$ is tangent to its boundary ($\bm B|_{\partial \Omega_{s}}\cdot \bm n_{\partial \Omega_{s}}=0$). A more geometric description is that the volume form $\bm A\wedge \bm B$ is transported by the flow and is thus invariant. Hereafter, we refer to the conservation of $\cH(\Omega_s)$ for $\Omega_s \subsetneq \Omega$ as the {\it local} conservation of helicity. In both ideal MHD and the MF equations, the local helicity is conserved in time for any suitable $\Omega_s$.
For a comprehensive review of magnetic relaxation and its topological constraints, we refer to \cite{yeates2019magnetohydrodynamic}.

Numerical simulations of magnetic relaxation yield nonphysical results if the
numerical schemes do not appropriately preserve helicity~\cite{he2025helicity};
if helicity is not conserved, then the magnetic field relaxes without topological
constraints towards the trivial zero state $\bm B=\bm{0}$. In our previous
work~\cite{he2025helicity} we introduced a structure-preserving scheme that
conserves discrete local helicity through the introduction of auxiliary variables
(fields over the domain). In this work we investigate whether such local helicity preservation is necessary
to predict the relaxed state, or whether preserving only the global helicity
\(\mathcal H(\Omega)\), through a single real-valued Lagrange multiplier rather
than auxiliary fields, already suffices. Realizing the
distinction between local and global helicities, we reframe the central question as
follows: we ask what can, and what cannot, be controlled by enforcing helicity constraints at the discrete level. In particular, we compare schemes preserving no helicity,
only global helicity, and discrete local helicity, and examine how these choices
affect the relaxed state, the relaxation pathway, and the retention of magnetic
structure.

 We therefore compare three finite element discretizations. The first takes no special care to conserve helicity on discretization and leads to nonphysical trivial states. The second is our projection-based scheme from \cite{he2025helicity}, which introduces the projection of the magnetic field onto another function space so as to conserve helicity for every magnetically closed domain. The third is a novel scheme where the global helicity constraint is enforced with a Lagrange multiplier. This corresponds to Taylor's relaxation theory \cite{moffatt2015magnetic,taylor1974relaxation}, which assumes that the global
helicity is approximately conserved in the real plasma relaxation, although local helicities can change due to reconnections.
 The first leads to $\bm B=\bm{0}$ and thus with vanishing current $\bm j = \nabla \times \bm B = \bm 0$; the second leads to so-called `nonlinear force-free fields' with $\bm j = \alpha(\bm x)\bm B$; the third leads to so-called `linear force-free fields' with $\bm j = \alpha \bm B$ for a constant $\alpha$. In the second and third cases, equilibrium is described by the vanishing of the Lorentz force
\begin{equation}\label{eqn:zero-Lorentz}
\bm j \times \bm B = \bm 0.
\end{equation}

The Lagrange multiplier scheme enforces the same global invariant as Taylor-type
relaxation models, but it does not constrain the redistribution of helicity among
magnetically closed subregions. The projection-based scheme imposes stronger
local helicity constraints. Our numerical experiments show that the distinction
between these two levels of constraint is important for helicity-carrying fields,
but is not sufficient by itself to guarantee preservation of the full topology of
zero-helicity braids. 
interpreted as a numerical analogue of local reconnection, but whether this is
physical or spurious depends on the modelling regime under consideration.



  However, real physical situations are not ideal, breaking helicity conservation. Taylor's relaxation theory assumes that the global helicity is approximately conserved to high accuracy, although local helicities are not \cite{taylor1974relaxation,yeates2019magnetohydrodynamic,moffatt2015magnetic}. A consequence of the Taylor relaxation theory is that turbulent plasmas relax toward a linear force-free state under only the global constraint.
  In the latter part of this paper, we discuss the possibility of using the local reconnections arising from numerical errors in the Lagrange multiplier approach as an approach for simulating Taylor relaxation. In short, {\it numerical errors (reconnections) in local helicity might reflect the reconnection of magnetic fields in the real physical problem, leading to physically relevant solutions}.  The physics of magnetic relaxation can be a decisive factor for the choice of numerical schemes, especially at the level of helicity preservation.

Another important consideration in magnetic relaxation is its performance on a wider range of topological configurations. While nonzero helicity implies nontrivial topology (e.g.~linked tubes or rings), the converse is not true: there can be topologically nontrivial magnetic fields with zero helicity. The analysis in \cite{he2025helicity} only applies to those fields with nonzero helicity, by proving a discrete Arnold inequality that guarantees a lower bound on the evolution of the magnetic energy. 
For topologically nontrivial fields with zero helicity, such as magnetic braid
configurations, Arnold-type helicity barriers do not provide a positive lower
bound on the magnetic energy. These examples therefore test the limitations of
helicity-based structure preservation: even preserving discrete local helicity
does not necessarily preserve the full braided field-line topology.

For open magnetic configurations such as braids, where magnetic flux crosses part
of the boundary, the classical helicity is not directly gauge invariant. This has
motivated other notions of helicity, like relative
helicity \cite{berger1984topological,finn1985magnetic} and Bevir--Gray helicity \cite{bevir1980relaxation}. These notions provide
important theoretical diagnostics for open magnetic fields, but their direct use
as structure-preserving finite element invariants remains largely open. In this
work, our numerical experiments show that the generalized helicity introduced in
our previous work \cite{he2025helicity} is not merely an analytical quantity, but also a practical
computable helicity for braided magnetic fields. In particular, it distinguishes
zero-generalized-helicity braids from helicity-carrying braids and provides an
effective topological constraint when nonzero.

More generally, the past decades have seen significant progress in finite element methods for MHD systems. In particular, schemes based on the finite element exterior calculus (FEEC) \cite{arnoldFiniteElementExterior2006,arnoldFiniteElementExterior2010,ArnoldFiniteElementExterior2018} have been developed that precisely preserve important structure, such as the magnetic Gauss law and helicity conservation \cite{hu2017stable,huHelicityconservativeFiniteElement2021,gawlikFiniteElementMethod2022,LaakmannStructurepreservinghelicityconservingfinite2023,maoIncompressibilityDivB0Preserving2025,zhangMassKineticEnergy2022,zhangMEEVCDiscretizationTwodimensional2024, BlickhanMRXdifferentiable3D2025,da2025error,ma2016robust}. Extensive numerical results demonstrate that standard finite element methods that do not explicitly enforce helicity conservation produce qualitatively wrong solutions, as discretization errors destroy topological structures, whereas helicity-preserving methods evolve toward physically meaningful solutions. While Lagrangian discretizations have been widely employed to track these constraints \cite{craig1986dynamic,longbottom1998magnetic, craig2005parker, wilmot2009magnetic, wilmot2009magneticparallel,craig2014current,candelaresiMimeticMethodsLagrangian2014a,zhou2014variational,zhou2016formation, zhou2017constructing}, Eulerian discretizations have advantages in stability and the handling of complex geometries \cite{he2025helicity,BlickhanMRXdifferentiable3D2025}. 

   The remainder of this paper is organized as follows.
In \Cref{sec:Preliminaries}, we introduce preliminaries and the magnetic topologies we consider, including magnetic knots and magnetic braids. In \Cref{sec:Non-con}, we propose a non-conservative scheme based on a na\"ive formulation. The projection-based finite element method of \cite{he2025helicity} is reviewed and discussed in \Cref{sec:MixedFEM}. Then in \Cref{sec:LM}, we propose a global structure-preserving scheme via Lagrange multipliers. In \Cref{sec:Numerical-experiment}, we present numerical results, and compare the non-conservative scheme, the projection-based method, and the Lagrange multiplier method, to explore the significance of global and local helicity preservation. In \Cref{sec:discussion}, we further discuss the background physical meaning of the two structure-preserving schemes. Finally we draw some conclusions in \Cref{sec:conclusion}.

\section{Preliminaries: helicity and magnetic topology}\label{sec:Preliminaries}
Let $\Omega$ be a bounded Lipschitz domain in $\mathbb{R}^3$;
if not otherwise specified, we assume that $\Omega$ is contractible. 
Let $\bm n$ denote the outward-pointing unit normal vector on $\partial\Omega$.
We use $\|\cdot\|$ and $(\cdot, \cdot)$ to denote the $L^2(\Omega)$ norm and inner product respectively, allowing $L^2(\Omega)$ to denote both the scalar- and vector-valued spaces. The Hilbert spaces $H^1$, $ \bm H(\curl) $ and $\bm H(\div) $ are defined as in e.g.~\cite{arnoldFiniteElementExterior2010}. We further introduce subspaces $H^1_0$, $\bm H_0(\curl)$ and  $\bm H_0(\div) $ with homogeneous boundary conditions on $\partial\Omega$.

The 3D de~Rham complex with homogeneous boundary conditions reads:
\begin{subequations}
\begin{equation}
    \begin{tikzcd}
    0\arrow{r}&H_0^1 \arrow[r, "\nabla"] &
   \bm H_0(\curl)\arrow[r,"\nabla\times"] &
   \bm H_0(\div) \arrow[r,"\nabla\cdot"] &
    L_0^2 \arrow{r} & 0.
    \end{tikzcd}
    \label{eq:deRhamcomplex}
\end{equation}
This complex \eqref{eq:deRhamcomplex} is exact on contractible domains.
We will use finite-element subcomplexes of \eqref{eq:deRhamcomplex} for discretization;
families of such subcomplexes are well-known, consisting of N\'ed\'elec \cite{nedelec1-0}, Raviart--Thomas\cite{raviart2006mixed}, and Brezzi--Douglas--Marini elements \cite{brezzi1985two}, each extending to arbitrary spatial dimensions and polynomial degrees.
Adopting the notation of \cite{ArnoldFiniteElementExterior2018}, we denote such a subcomplex by
\begin{equation}
    \begin{tikzcd}
0\arrow{r}& H_{0}^{1,h}\arrow[r, "\nabla"] &
   \bm H_0^h(\curl)\arrow[r,"\nabla\times"] &
   \bm H_0^h(\div)\arrow[r,"\nabla\cdot"] &
    L_0^{2,h} \arrow{r} & 0.
    \end{tikzcd}
    \label{eq:subcomplex}
\end{equation}
\end{subequations}
We require that \eqref{eq:subcomplex} is exact on contractible domains.

Define the magnetic energy
\begin{equation}\label{eqn:energy}
    \cE=\int_{\Omega} \bm B\cdot \bm B\ \d x.
\end{equation}
We always assume the whole domain $\Omega$ is a magnetic closed domain, and denote the global helicity $\cH:=\cH(\Omega)$. The Arnold inequality \cite{arnold1974asymptotic} is a crucial result imposing a topological barrier on the energy achievable by magnetic relaxation. It states that
\begin{equation}
    |\cH| \le C\cE,
\end{equation}
for a constant $C>0$. We close \eqref{eqn:magneto-frictional} with
the boundary conditions
\begin{equation} \label{eqn:bcs}
     \bm{B}\cdot\bm{n} = 0,  \quad \bm j \times \bm n = \bm{0}, \quad \bm{u}\cdot\bm{n} = 0,\quad \text{on } \partial \Omega,
\end{equation}
which ensure that the magnetic energy $\cE$ decreases until reaching its equilibrium. The local helicity on any magnetic subdomain, and hence the global helicity, is conserved.

 Magnetic fields can exhibit non-trivial topology, arising from the winding, linking, or tangling of magnetic field lines. A classic example is the magnetic knot, in which field lines form closed loops that are linked or knotted in a topologically non-trivial way.
 In contrast, magnetic braids consist of open flux tubes whose field lines are tangled between two boundaries (e.g., photospheric footpoints) but do not necessarily form closed, linked loops \cite{YeatesWilmotSmithHornig2010}.
Because opposite twists can cancel, magnetic braids often possess zero net helicity. Consequently, the global helicity $\mathcal{H}$ is insensitive to their internal topological complexity and fails to distinguish between different braiding patterns.
Another example is the Borromean rings configuration, where the global helicity vanishes despite non-trivial triple linking.

\begin{figure}[h!]
    \centering
    \begin{tabular}{ccc}
        \hspace{-1.18cm}{\includegraphics[width=0.34\textwidth]{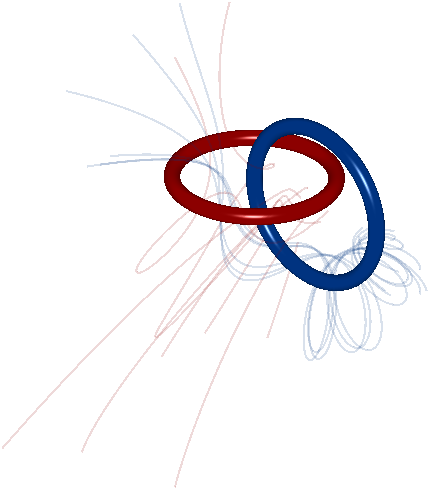}} & 
        \hspace{0.6cm}{\includegraphics[width=0.28\textwidth]{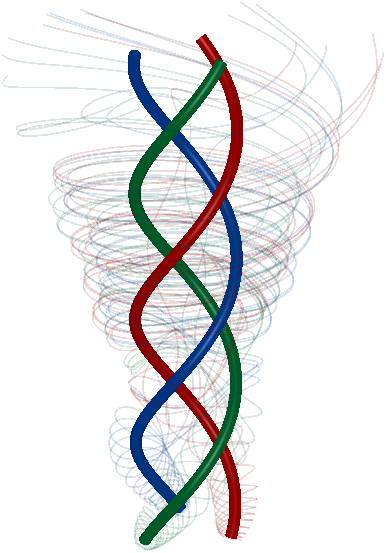}} & 
        \hspace{-0.28cm}{\raisebox{0.7cm}{\includegraphics[width=0.35\textwidth]{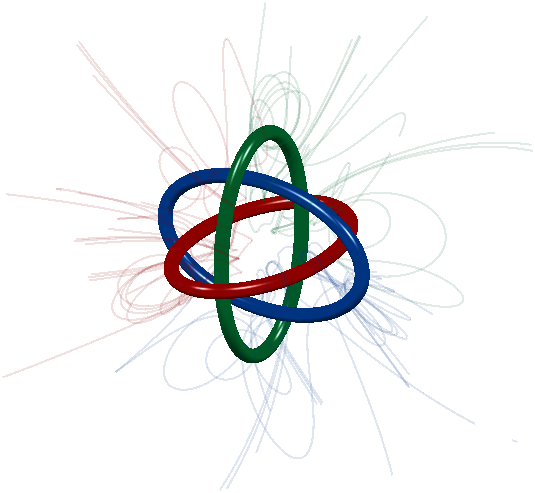}}} \\
        (a) Magnetic knot &\ \ \ \ (b) Magnetic braid & (c) Borromean ring 
    \end{tabular}
    \caption{Illustration of non-trivial topology of magnetic fields. (a) Magnetic knot with non-zero helicity, (b)  Magnetic braid with zero helicity and (c) Borromean ring with zero helicity. Here the thin lines represent the magnetic field lines surrounding the high-intensity core tubes.}
\end{figure}


This limitation of helicity in describing richer topological configurations inspires the investigation of other measures, such as higher-order linking invariants \cite{ArnoldTopologicalMethodsHydrodynamics2021,massey1998higher} for configurations like the Borromean rings, and more refined, local, or field-line-based definitions, such as field-line mapping, topological entropy, or distributions of field-line helicity, to describe braided topology \cite{YeatesHornig2013,YeatesWilmotSmithHornig2010}.
 These quantities reveal local winding and stretching even when global helicity is zero. However, reflecting these structures in finite element computation is still largely open, and is beyond the scope of this paper.

The fundamental structural differences between magnetic knots (closed, helicity-carrying) and magnetic braids (open, often helicity-neutral yet topologically rich) 
provide test cases not only for the effectiveness, but also for the
limitations, of helicity-preserving numerical schemes.
We will investigate the dynamics of these topological configurations with algorithms for magnetic relaxation that enforce varying degrees of helicity conservation during energy minimization. 

\section{Non-conservative scheme}\label{sec:Non-con}
A natural discretization of the MF equations \eqref{eqn:magneto-frictional}, \eqref{eqn:bcs} using variables from a de~Rham complex follows from \cite{hu2017stable,hu2019structure}.

\smallskip

\begin{problem}[Non-conservative scheme]
   At time step $n\ge 0$,  find $$(\bm B_h^{n+1}, \bm E_h^{n+1/2},\bm j_h^{n+1/2})\in \bm H_0^h(\div)\times[\bm H^h_0(\curl)]^2$$ such that for any test function $(\bm C_h,\bm F_h, \bm k_h) \in \bm H^h_0(\div)\times[\bm H^h_0(\curl)]^2$,
\begin{subequations}\label{eq:nonconservative-scheme}
    \begin{align}
        \left(\frac{\bm B_h^{n+1} - \bm B_h^{n}}{\Delta t}, \bm{C}_h\right) + (\nabla\times\bm{E}_h^{n+1/2}, \bm{C}_h) &= 0,  \\
        (\bm{E}_h^{n+1/2}, \bm{F}_h) +  \tau((\bm j_h^{n+1/2}\times \bm B_h^{n+1/2}) \times \bm{B}^{n+1/2}_h, \bm{F}_h) &= 0,  \\
        (\bm{j}_h^{n+1/2}, \bm{k}_h) -(\bm{B}_h^{n+1/2}, \nabla\times\bm{k}_h)&=0 ,
    \end{align}
    \end{subequations}
\end{problem}
where we use the Crank--Nicolson temporal discretization method for $\bm B_h$, that is, $\bm B_h^{n+1/2}=(\bm B^{n+1}_h+\bm B^{n}_h)/2$, and consider $\bm{E}_h^{n+1/2}$ and $\bm j_h^{n+1/2}$ as independent variables.
The scheme preserves the magnetic Gauss law $\nabla\cdot\bm B=0$ and the energy decay.

\smallskip

\begin{theorem}\label{thm:Hdiv}
    Assume that the initial condition satisfies $\nabla\cdot \bm B_h^0=0$ and that \\$(\bm B_h^{n+1}, \bm E_h^{n+1/2}, \bm j_h^{n+1/2})$ is a solution of \eqref{eq:nonconservative-scheme}. Then the energy is decreasing and the discrete Gauss law holds, that is,
        \begin{equation}\label{energy-law}
            \cE^{n+1}_h\le \cE^n_h\le \ldots \le \cE^0_h,
        \end{equation}
        and 
        $$
        \nabla\cdot \bm B^{n+1}_h=0,\qquad n\ge 0,
        $$
 where $\cE^n_h=\l( \bm B_h^{n},\bm B_h^{n}\r)$.
\end{theorem}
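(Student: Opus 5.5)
The proof uses two facts: the discrete complex property $\nabla\times \bm H_0^h(\curl)\subset \bm H_0^h(\div)$, and the choice of test functions coupling the three equations so that the nonlinear term gives a nonpositive contribution.

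\textbf{Gauss law.} Since $\nabla\times\bm E_h^{n+1/2}\in\bm H_0^h(\div)$ and $\bm B_h^{n+1}-\bm B_h^n\in \bm H_0^h(\div)$, the first equation holds for every $\bm C_h$ in the same space that contains the residual. We may therefore take
\[
\bm C_h=\frac{\bm B_h^{n+1}-\bm B_h^n}{\Delta t}+\nabla\times\bm E_h^{n+1/2}
\]
and conclude the strong identity $\bm B_h^{n+1}=\bm B_h^n-\Delta t\,\nabla\times\bm E_h^{n+1/2}$. Applying $\nabla\cdot$ and using $\nabla\cdot\nabla\times=0$ gives $\nabla\cdot\bm B_h^{n+1}=\nabla\cdot\bm B_h^n$. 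Induction from $\nabla\cdot\bm B_h^0=0$ then yields the discrete Gauss law for all $n$.

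\textbf{Energy decay.} Take $\bm C_h=\bm B_h^{n+1/2}$ in the first equation. The Crank--Nicolson average gives
\[
\l(\bm B_h^{n+1}-\bm B_h^n,\ \tfrac12(\bm B_h^{n+1}+\bm B_h^n)\r)=\tfrac12\l(\cE_h^{n+1}-\cE_h^n\r),
\]
so
\[
\frac{\cE_h^{n+1}-\cE_h^n}{2\Delta t}=-(\nabla\times\bm E_h^{n+1/2},\bm B_h^{n+1/2}).
\]
Next take $\bm k_h=\bm E_h^{n+1/2}$ in the third equation. This gives $(\bm B_h^{n+1/2},\nabla\times\bm E_h^{n+1/2})=(\bm j_h^{n+1/2},\bm E_h^{n+1/2})$. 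Then take $\bm F_h=\bm j_h^{n+1/2}$ in the second equation. This gives
\[
(\bm E_h^{n+1/2},\bm j_h^{n+1/2})=-\tau\l((\bm j_h\times\bm B_h)\times\bm B_h,\bm j_h\r),
\]
where all quantities are evaluated at $n+1/2$.

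\textbf{Sign of the nonlinear term.} This is the only step needing care. The scalar triple product identity $(\bm a\times\bm b)\cdot\bm c=-(\bm c\times\bm b)\cdot\bm a$ gives, pointwise,
\[
\l((\bm j\times\bm B)\times\bm B\r)\cdot\bm j=-|\bm j\times\bm B|^2 .
\]
Hence $(\bm E_h,\bm j_h)=\tau\|\bm j_h\times\bm B_h\|^2\ge0$. Combining the three identities,
\[
\cE_h^{n+1}-\cE_h^n=-2\Delta t\,\tau\,\|\bm j_h^{n+1/2}\times\bm B_h^{n+1/2}\|^2\le0 .
\]
Chaining this over $n$ gives \eqref{energy-law}.

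The main thing to check is that every chosen test function lies in its admissible space: $\bm B_h^{n+1/2}\in\bm H_0^h(\div)$, and $\bm E_h^{n+1/2},\bm j_h^{n+1/2}\in\bm H_0^h(\curl)$. All three hold by construction, so the argument closes.
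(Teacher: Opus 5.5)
Your proof is correct, and it is the standard argument behind this result. The subcomplex inclusion $\nabla\times\bm H_0^h(\curl)\subset\bm H_0^h(\div)$ turns the first equation into the strong identity $\bm B_h^{n+1}=\bm B_h^n-\Delta t\,\nabla\times\bm E_h^{n+1/2}$, which gives the Gauss law by induction. Testing with $\bm B_h^{n+1/2}$, $\bm E_h^{n+1/2}$, $\bm j_h^{n+1/2}$ then yields $\cE_h^{n+1}-\cE_h^n=-2\tau\Delta t\,\|\bm j_h^{n+1/2}\times\bm B_h^{n+1/2}\|^2\le 0$.

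The paper states this theorem without a written proof, relying implicitly on exactly this reasoning. It uses the same "strong form, then take the divergence" step in its theorem for the Lagrange multiplier scheme, so your argument matches what the paper takes for granted.
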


\smallskip
However, the discrete helicity is not conserved at either the local or global level due to numerical pollution \cite{huHelicityconservativeFiniteElement2021}. Moreover, the lack of the discrete Arnold inequality leads to nonphysical solutions, as shown in \Cref{sec:Numerical-experiment} below.

\section{Projection-based mixed finite element scheme}\label{sec:MixedFEM}
We briefly review the model and schemes presented in \cite{he2025helicity}. The projection-based mixed finite element scheme ({\it projection-based scheme}, in short) in \cite{he2025helicity} introduces an additional auxiliary variable, and  preserves discrete local helicity on magnetically closed subdomains.

\smallskip

\begin{problem}[projection-based scheme]
   At time step $n\ge 0$,   find $$(\bm B_h^{n+1}, \bm E_h^{n+1/2}, \bm j_h^{n+1/2}, \bm H_h^{n+1/2})\in \bm H_0^h(\div) \times [\bm H_0^h(\curl)]^3$$ such that for all test functions $(\bm C_h, \bm F_h, \bm k_h,\bm D_h)\in \bm H_0^h(\div) \times [\bm H_0^h(\curl)]^3$,
    \begin{subequations}\label{eq:mixed-scheme}
    \begin{align}
        \left(\frac{\bm B_h^{n+1} - \bm B_h^{n}}{\Delta t}, \bm{C}_h\right) + (\nabla\times\bm{E}_h^{n+1/2}, \bm{C}_h) &= 0,  \\
        (\bm{E}_h^{n+1/2}, \bm{F}_h) +  \tau((\bm j_h^{n+1/2}\times \bm H_h^{n+1/2})\times \bm{H}^{n+1/2}_h, \bm{F}_h) &= 0,  \label{eq:mixed-scheme2}\\
        (\bm{j}_h^{n+1/2}, \bm{k}_h)-(\bm{B}_h^{n+1/2}, \nabla\times\bm{k}_h) &=0 ,  \\
        (\bm{H}_h^{n+1/2}, \bm{D}_h) -(\bm{B}_h^{n+1/2}, \bm{D}_h)&= 0.\label{eq:mixed-scheme5}
    \end{align}
    \end{subequations}
\end{problem}
The time discretization strategy is the same as that of \Cref{sec:Non-con}; the difference is the introduction of the auxiliary variable, where $\bm H \in \bm H_0^h(\curl)$ is a projection of the magnetic field $\bm B \in \bm H_0^h(\div)$ to avoid helicity pollution in discretization. This auxiliary variable is exactly that indicated by the framework of \cite{andrews2024enforcing}. For the solver, we apply Newton iteration and solve the linearized system by the direct solver MUMPS~\cite{amestoy2001}. 

\smallskip

\begin{theorem}\label{thm:Mix}
    Assume that the initial condition satisfies $\nabla\cdot \bm B_h^0=0$ and that\\ $(\bm B_h^{n+1}, \bm E_h^{n+1/2}, \bm j_h^{n+1/2}, \bm H_h^{n+1/2})$ is a solution of \eqref{eq:mixed-scheme}. Then the energy is nonincreasing, the discrete Gauss law holds, and the (global) discrete Arnold inequality holds 
    \begin{equation}
             |\mathcal{H}^{n}_h|\leq C\mathcal{E}^{n}_h,\qquad n\ge 0,
        \end{equation} 
        where $\mathcal{H}^{n}_h=(\bA^{n}_h,\bB^{n}_h)$. Moreover, for magnetically closed subdomain $\Omega_{s,h}$ such that $\bm B^{n+1/2}_{h}|_{\Omega_{s,h}}\in \bm H^h_0(\div,\Omega_{s,h})$ and $\bm H^{n+1/2}_{h}|_{\Omega_{s,h}},\bm E^{n+1/2}_{h}|_{\Omega_{s,h}}\in \bm H^h_0(\curl,\Omega_{s,h})$, the local helicity is conserved in the sense
    \begin{equation}\label{local-helicity-conservation}
    \int_{\Omega_{s,h}}\bm B_h^{n+1}\cdot \bm A_h^{n+1}\, \d x =\int_{\Omega_{s,h}}\bm B_h^{n}\cdot \bm A_h^{n}\, \d x.
\end{equation}
\end{theorem}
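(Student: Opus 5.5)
The plan is to test each equation of \eqref{eq:mixed-scheme} with a carefully chosen discrete field, so that the conservation and dissipation statements fall out of the algebra. Throughout, the vector potentials are discrete: $\bA_h^n\in\bm H_0^h(\curl)$ with $\nabla\times\bA_h^n=\bB_h^n$. These exist by exactness of \eqref{eq:subcomplex}, because $\bB_h^n$ is divergence-free, and we fix the gauge so that $\bA_h^{n+1}-\bA_h^n=-\Delta t\,\bm E_h^{n+1/2}$ (see below).

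\textbf{Gauss law.} Since $\nabla\times\bm H_0^h(\curl)\subset\bm H_0^h(\div)$, I would take $\bm C_h=\frac{\bB_h^{n+1}-\bB_h^n}{\Delta t}+\nabla\times\bm E_h^{n+1/2}$ in the first equation. This gives the strong identity
\[
\bB_h^{n+1}=\bB_h^n-\Delta t\,\nabla\times\bm E_h^{n+1/2}.
\]
Taking divergence and inducting on $n$ from $\nabla\cdot\bB_h^0=0$ gives $\nabla\cdot\bB_h^{n+1}=0$.

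\textbf{Energy.} I would test the first equation with $\bm C_h=\bB_h^{n+1/2}$, which gives
\[
(\cE_h^{n+1}-\cE_h^n)/(2\Delta t)=-(\nabla\times\bm E_h^{n+1/2},\bB_h^{n+1/2}).
\]
Taking $\bm k_h=\bm E_h^{n+1/2}$ in the third equation turns the right-hand side into $-(\bm j_h^{n+1/2},\bm E_h^{n+1/2})$. Then $\bm F_h=\bm j_h^{n+1/2}$ in \eqref{eq:mixed-scheme2} gives
\[
(\bm E_h,\bm j_h)=-\tau((\bm j_h\times\bm H_h)\times\bm H_h,\bm j_h)=\tau\|\bm j_h\times\bm H_h\|^2\ge0,
\]
using $(\bm a\times\bm b)\cdot\bm c=-(\bm a\times\bm c)\cdot\bm b$. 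Hence the energy is nonincreasing.

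\textbf{Discrete Arnold inequality.} I would bound
\[
|\cH_h^n|\le\|\bA_h^n\|\,\|\bB_h^n\|.
\]
Choosing $\bA_h^n$ discretely divergence-free (orthogonal to $\nabla H_0^{1,h}$), the uniform discrete Poincaré inequality $\|\bA_h\|\le C\|\nabla\times\bA_h\|$ on the Hodge complement gives $|\cH_h^n|\le C\|\bB_h^n\|^2=C\cE_h^n$. This is the one place where an external result is needed: the $h$-uniform discrete Poincaré inequality for FEEC subcomplexes with bounded cochain projections \cite{arnoldFiniteElementExterior2010}. Since helicity is gauge invariant in $\bm H_0^h(\curl)$ (because $(\nabla\phi_h,\bB_h)=0$ when $\bB_h\in\bm H_0(\div)$ is divergence-free), the gauge choice is harmless.

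\textbf{Local helicity.} From the strong identity, $\nabla\times(\bA_h^{n+1}-\bA_h^n+\Delta t\bm E_h^{n+1/2})=0$. So the gauge choice $\bA_h^{n+1}=\bA_h^n-\Delta t\bm E_h^{n+1/2}$ is legitimate, and any other choice differs by a discrete gradient. Then, with all integrals over $\Omega_{s,h}$,
\[
(\bA^{n+1},\bB^{n+1})-(\bA^n,\bB^n)=(\bA^{n+1}-\bA^n,\bB^{n+1}+\bB^n)/1\cdot\tfrac12\cdot 2 \;\text{(symmetrized)},
\]
more precisely $=(\bA^{n+1}-\bA^n,\bB^{n+1/2})+(\bA^{n+1/2},\bB^{n+1}-\bB^n)$. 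On $\Omega_{s,h}$, the integration by parts $(\bA,\nabla\times\bm E)=(\nabla\times\bA,\bm E)$ is valid because $\bm E_h^{n+1/2}$ has vanishing tangential trace there. This makes the two terms combine to
\[
-2\Delta t(\bm E_h^{n+1/2},\bB_h^{n+1/2})_{\Omega_{s,h}}.
\]
It remains to show this vanishes, and this is the main obstacle. Pointwise $\bm E\cdot\bm H=0$ follows from \eqref{eq:mixed-scheme2} only if $\bm E_h=-\tau(\bm j_h\times\bm H_h)\times\bm H_h$ exactly, which is false since the equation is a projection. I would try to use the projection identities $(\bB,\bm E)=(\bm H,\bm E)$ from \eqref{eq:mixed-scheme5}, valid after restricting and extending $\bm E|_{\Omega_{s,h}}$ by zero in $\bm H_0^h(\curl)$, together with $\bm F_h=\bm H_h^{n+1/2}$ (extended by zero) in \eqref{eq:mixed-scheme2}. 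This gives
\[
(\bm E,\bm H)_{\Omega_{s,h}}=-\tau((\bm j\times\bm H)\times\bm H,\bm H)_{\Omega_{s,h}}=0,
\]
because $(\bm a\times\bm H)\cdot\bm H\equiv0$ pointwise. The zero-extension of $\bm H_h$, $\bm E_h$ is exactly what the hypotheses of the theorem on tangential traces guarantee, and that is why they are imposed.

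Finally, gauge invariance on $\Omega_{s,h}$, where $(\nabla\phi_h,\bB_h)_{\Omega_{s,h}}=0$ since $\bB_h\cdot\bm n=0$ on $\partial\Omega_{s,h}$, shows that \eqref{local-helicity-conservation} holds for any choice of discrete potentials.
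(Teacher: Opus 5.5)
Your proof is correct and follows the paper's argument in its essential step: once the change in local helicity is reduced to $-2\Delta t\,(\bm E_h^{n+1/2},\bm B_h^{n+1/2})_{\Omega_{s,h}}$, you test \eqref{eq:mixed-scheme5} with the zero extension of $\bm E_h^{n+1/2}$ and \eqref{eq:mixed-scheme2} with the zero extension of $\bm H_h^{n+1/2}$, exactly as the paper does. Your energy, Gauss-law and Arnold arguments are the standard ones; the paper only cites \cite{he2025helicity} for these.

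The one difference is in the reduction step. You use global potentials with $\bm A_h^{n+1}-\bm A_h^n=-\Delta t\,\bm E_h^{n+1/2}$ and integrate by parts via the tangential trace of $\bm E_h^{n+1/2}$. The paper instead uses a local potential $\bm A_h^{n+1/2}\in\bm H_0^h(\curl,\Omega_{s,h})$ together with $\mathbb{Q}_h^{\div}$. Your version is slightly cleaner and does not need such a local potential to exist, which can fail, e.g.\ for a toroidal $\Omega_{s,h}$ carrying flux. For your final gauge-invariance remark, add that $\bm n\cdot\nabla\times\bm E_h^{n+1/2}=0$ on $\partial\Omega_{s,h}$ gives $\bm B_h^{n}\cdot\bm n=\bm B_h^{n+1}\cdot\bm n=0$ separately, not just for $\bm B_h^{n+1/2}$.
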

\begin{proof}
    The proof of the energy decay, the discrete Gauss law and the discrete Arnold inequality can be found in \cite{he2025helicity}. For the preservation of local helicity, we first notice that by definition there exists a vector potential 
    \begin{equation}
        \bm A^{n+{1/2}}_h=\frac{\bm A^{n+{1}}_h+\bm A^{n
    }_h}{2}\in \bm H^h_0(\curl,\Omega_{s,h}),\qquad \nabla \times \bm A^{n+{1/2}}_h=\bm B^{n+{1/2}}_h.
    \end{equation}
Therefore, using integration by parts at this subdomain, we get (see \cite[Theorem 5]{huHelicityconservativeFiniteElement2021} for a similar argument)
    \begin{align*}
        &\int_{\Omega_{s,h}}\bm B_h^{n+1}\cdot \bm A_h^{n+1}\, \d x -\int_{\Omega_{s,h}}\bm B_h^{n}\cdot \bm A_h^{n}\, \d x\\
        &=\int_{\Omega_{s,h}}\l(\bm B_h^{n+1}-\bm B_h^{n} \r)\cdot \bm A_h^{n+1/2}\, \d x +\int_{\Omega_{s,h}}\l(\bm A_h^{n+1}-\bm A_h^{n} \r)\cdot \bm B_h^{n+1/2}\, \d x\\
        &=2\int_{\Omega_{s,h}}\l(\bm B_h^{n+1}-\bm B_h^{n} \r)\cdot \bm A_h^{n+1/2}\, \d x.
    \end{align*}
 By assumption, we can take zero extension $\widetilde{\bE}_{h}^{n+1/2}=\bE_h^{n+1/2}\mathbf{1}_{\Omega_{s,h}}\in \bm H_0^h(\curl)$,  $\widetilde{\bA}_{h}^{n+1/2}=\bA_h^{n+1/2}\mathbf{1}_{\Omega_{s,h}}\in \bm H_0^h(\curl)$, $\widetilde{\bH}_{h}^{n+1/2}=\bH_h^{n+1/2}\mathbf{1}_{\Omega_{s,h}}\in \bm H_0^h(\curl)$, and take the test $\bC_h=\mathbb{Q}_h^{\div}\widetilde{\bA}_{h}^{n+1/2}$, where $\mathbb{Q}_h^{\div}$ is the $L^2$ projection to $\bm H_0^h(\div)$, $\bm D_h=\widetilde{\bE}_{h}^{n+1/2}$ and $\bm F_h=\widetilde{\bH}_{h}^{n+1/2}$, we get 
 \begin{align*}
     &\int_{\Omega_{s,h}}\bm B_h^{n+1}\cdot \bm A_h^{n+1}\, \d x -\int_{\Omega_{s,h}}\bm B_h^{n}\cdot \bm A_h^{n}\, \d x\\
     &=2\int_{\Omega_{h}}\l(\bm B_h^{n+1}-\bm B_h^{n} \r)\cdot \widetilde{\bm A}_h^{n+1/2}\, \d x\\
     &=2\int_{\Omega_{h}}\l(\bm B_h^{n+1}-\bm B_h^{n} \r)\cdot \mathbb{Q}_h^{\div}\widetilde{\bm A}_h^{n+1/2}\, \d x\\
        &=-2\Delta t \int_{\Omega_{h}}\nabla\times \bE_h^{n+1/2} \cdot \widetilde{\bm A}_h^{n+1/2}\, \d x\\
        &=-2\Delta t\int_{\Omega_{s,h}}\nabla\times \bE_h^{n+1/2} \cdot \bm A_h^{n+1/2}\, \d x\\
     &=-2\Delta t \int_{\Omega_{s,h}}\bE_{h}^{n+1/2}\cdot  \bB_h^{n+1/2}\, \d x\\
     &=-2\Delta t \int_{\Omega_{h}}\widetilde{\bE}_{h}^{n+1/2}\cdot  \bB_h^{n+1/2}\, \d x\\
      &=-2\Delta t \int_{\Omega_{h}}\widetilde{\bE}_{h}^{n+1/2}\cdot  \bH_h^{n+1/2}\, \d x\\
      &=-2\Delta t\int_{\Omega_{s,h}} \bE_h^{n+1/2} \cdot \bH_h^{n+1/2} \, \d x\\
      &=-2\Delta t\int_{\Omega_{h}} \bE_h^{n+1/2}\cdot \widetilde{\bH}_h^{n+1/2} \, \d x\\
      &=2\Delta t \tau\int_{\Omega_{h}} (\bm j_h^{n+1/2}\times \bm H_h^{n+1/2})\times \bm{H}^{n+1/2}_h\cdot \widetilde{\bm{H}}^{n+1/2}_h \, \d x\\
       &=2\Delta t\tau \int_{\Omega_{s,h}}  (\bm j_h^{n+1/2}\times \bm H_h^{n+1/2})\times \bm{H}^{n+1/2}_h\cdot \bm{H}^{n+1/2}_h \, \d x=0.
 \end{align*}
\end{proof}

\smallskip

\begin{remark}
The discrete Arnold inequality can be proven for any magnetic closed subdomain, i.e.
\begin{equation}
    \l|\int_{\Omega_{s,h}}\bm B_h^{n}\cdot \bm A_h^{n}\, \d x\r| \le C\int_{\Omega_{s,h}}\bm B_h^{n}\cdot \bm B_h^{n}\, \d x.
\end{equation}
Without qualification, by the Arnold inequality we mean the global version, i.e., the above inequality with $\Omega_{s,h} = \Omega$.
\end{remark}

\section{The Lagrange multiplier scheme}\label{sec:LM}
In this part, we introduce a scheme that preserves the global helicity by using a Lagrange multiplier. This approach is inspired by a family of structure-preserving methods for gradient systems \cite{cheng2020new,cheng2020global}, the Klein--Gordon--Schr\"odinger system \cite{guo2023mass}, the geometric evolution equation \cite{garcke2025structure}, a two-phase Stokes model \cite{garcke2025structure_twophase}, and incompressible flows based on finite element exterior calculus \cite{tonnon2024semi}. The key idea is to incorporate scalar variables with energy/helicity variational terms and the evolution equation will reduce to continuous model under mild conditions. Here the evolution equation can either be chosen for the magnetic field $\bB$ \eqref{eqn:magnetic-advection} or the magnetic potential $\bA$ \eqref{eqn:lm-derive}. We choose the latter since it will lead to a discrete scheme that does not violate the discrete Arnold inequality (see \cref{thm:SP} below).

We therefore reformulate \eqref{eqn:magnetic-advection} in terms of the magnetic potential $\bm A$:
\begin{equation}\label{eqn:lm-derive}
    \partial_t\bm A+ \bm E= \bm 0.
\end{equation}
The energy and helicity can be rewritten in terms of $\bm A$ as
\begin{equation}
    \cE=\cE(\bm A)=(\nabla \times \bm  A,\nabla \times \bm  A ),\qquad \cH=\cH(\bm A)=(\nabla \times \bm  A,\bm A ).
\end{equation}

\subsection{Model derivation}
We introduce two Lagrange multipliers into \eqref{eqn:lm-derive} to enforce the energy law and helicity conservation. This yields the following evolution equation
\begin{equation}
    \partial_t\bm A+ \bm E+\lambda_{\cE}\frac{\delta \cE}{\delta \bm A}+\lambda_{\cH}\frac{\delta \cH}{\delta \bm A}  = \bm 0.
\end{equation}
Using Woltjer's variational principle (see \Cref{app:vp}) \cite{woltjer1958theorem}, we have 
\begin{equation}
   \frac{\delta \cE}{\delta \bm A} = 2 \bm j ,\qquad  \frac{\delta \cH}{\delta \bm A} = 2 \bm B,
\end{equation}
where $\bm j=\nabla\times \bm B=\nabla\times\nabla\times \bm A$.

This leads to the continuous PDE system with variables 
 $(\bm B, \bm A,\bm E,\bm j, \lambda_{\cE},\lambda_{H})$
\begin{subequations}\label{eqn:lm-con}
\begin{align}
\partial_t\bm A+ \bm E + 2\lambda_{\cH}\bm B + 2\lambda_{\cE}\bm j&=\bm  0, \label{eqn:lm-con1} \\
 \bm B& = \nabla\times \bm A, \label{eqn:lm-con2}\\
\bm E + \tau(\bm j\times \bm B)\times\bm B &= \bm 0, \label{eqn:lm-con4}\\
\bm j&=\nabla\times \bm B,\label{eqn:lm-con5}\\
\frac{\d}{\d t}\mathcal{E} &= -2\tau \|\bm j\times \bm B\|^2, \label{eqn:lm-con6}\\
\frac{\d}{\d t}\mathcal{H} &=0. \label{eqn:lm-con7}
\end{align}
\end{subequations}
The last two scalar equations enforce the energy law and helicity conservation. The Lagrange multipliers reduce to zero under mild conditions and 
 this model is equivalent to \eqref{eqn:magneto-frictional}.

 \smallskip

 \begin{theorem}\label{thm:equivalent}
    Assume that $(\bm B, \bm  A, \bm E, \bm j)$ and $(\lambda_{\cE}, \lambda_{\cH})$ are the solution of system \eqref{eqn:lm-con}. Then $\lambda_{\cE} = \lambda_{\cH} = 0$ provided $\bm B$ and $\bm j$ are
linearly independent in $L^2(\Omega)$ (equivalently, $\bm j \neq c\,\bm B$
for every constant $c \in \mathbb{R}$).
\end{theorem}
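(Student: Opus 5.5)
We will compute the time derivatives of $\cE$ and $\cH$ directly from the evolution equation \eqref{eqn:lm-con1} and compare them with the imposed laws \eqref{eqn:lm-con6}--\eqref{eqn:lm-con7}. This produces a $2\times 2$ linear system for $(\lambda_{\cE},\lambda_{\cH})$ whose coefficient matrix is the Gram matrix of $\{\bm B,\bm j\}$ in $L^2(\Omega)$. Linear independence of $\bm B$ and $\bm j$ then forces both multipliers to vanish.

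\textbf{Energy.} Using \eqref{eqn:lm-con2} and integration by parts with the boundary conditions \eqref{eqn:bcs}, we get
\begin{equation*}
\frac{\d}{\d t}\cE = 2(\nabla\times\partial_t\bm A,\bm B) = 2(\partial_t\bm A,\nabla\times\bm B) = 2(\partial_t\bm A,\bm j).
\end{equation*}
Substituting $\partial_t\bm A = -\bm E - 2\lambda_{\cH}\bm B - 2\lambda_{\cE}\bm j$ and using \eqref{eqn:lm-con4}, the $\bm E$-term becomes
\begin{equation*}
-2(\bm E,\bm j) = 2\tau((\bm j\times\bm B)\times\bm B,\bm j) = -2\tau\|\bm j\times\bm B\|^2,
\end{equation*}
by the triple product identity $(\bm a\times\bm B)\cdot\bm j = -(\bm j\times\bm B)\cdot\bm a$ applied with $\bm a = \bm j\times\bm B$. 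Comparing with \eqref{eqn:lm-con6} yields
\begin{equation*}
\lambda_{\cH}(\bm B,\bm j) + \lambda_{\cE}\|\bm j\|^2 = 0 .
\end{equation*}

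\textbf{Helicity.} By the symmetry $(\nabla\times\bm a,\bm b) = (\bm a,\nabla\times\bm b)$, we have
\begin{equation*}
\frac{\d}{\d t}\cH = (\nabla\times\partial_t\bm A,\bm A) + (\bm B,\partial_t\bm A) = 2(\partial_t\bm A,\bm B).
\end{equation*}
Here $(\bm E,\bm B)=0$ pointwise, since $(\bm j\times\bm B)\times\bm B$ is orthogonal to $\bm B$. Setting the result to zero via \eqref{eqn:lm-con7} gives
\begin{equation*}
\lambda_{\cH}\|\bm B\|^2 + \lambda_{\cE}(\bm j,\bm B) = 0 .
\end{equation*}

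\textbf{Conclusion.} The two relations form a homogeneous linear system for $(\lambda_{\cH},\lambda_{\cE})$ with coefficient matrix the Gram matrix of $\bm B$ and $\bm j$. Its determinant is $\|\bm B\|^2\|\bm j\|^2 - (\bm B,\bm j)^2$, which by Cauchy--Schwarz is strictly positive exactly when $\bm B$ and $\bm j$ are linearly independent in $L^2(\Omega)$. Hence $\lambda_{\cE}=\lambda_{\cH}=0$.

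\textbf{Main obstacle.} The only delicate step is justifying the integrations by parts. In the computation of $\frac{\d}{\d t}\cE$ and $\frac{\d}{\d t}\cH$, boundary terms must vanish, which requires the appropriate tangential conditions on $\bm A$ or $\partial_t\bm A$ and on $\bm j$. We take these from \eqref{eqn:bcs}, understood with $\bm A$ (and hence $\partial_t\bm A$) in $\bm H_0(\curl)$, and with $\bm j\times\bm n=\bm 0$ on $\partial\Omega$.
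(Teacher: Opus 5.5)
Your proposal is correct and follows essentially the same route as the paper: pairing the $\bm A$-evolution equation with $\bm j$ and with $\bm B$ (equivalently, computing $\frac{\d}{\d t}\cE$ and $\frac{\d}{\d t}\cH$ from it), comparing with the enforced laws to obtain the homogeneous Gram-matrix system, and concluding by strict Cauchy--Schwarz. You also state explicitly the pointwise identity $(\bm E,\bm B)=0$ and the $\bm H_0(\curl)$ condition on $\bm A$ needed for the integrations by parts; the paper's proof uses both without saying so.
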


\begin{proof}
    Testing the evolution equation \eqref{eqn:lm-con1}  with $ \bm j$, we obtain 
\begin{align*}
    0= (\partial_t\bm A,\bm j) + (\bm E,\bm j) +2\lambda_{\cE}\|\bm j\|^2+ 2\lambda_{\cH}(\bm B,\bm j).
\end{align*}
With \eqref{eqn:lm-con2}--\eqref{eqn:lm-con6} and straightforward computation, the first two terms give 
\begin{align*}
    (\partial_t\bm A,\bm j) + (\bm E,\bm j)
    &= (\partial_t\bm A,\nabla\times  \bm B)-\tau((\bm j\times \bm B)\times \bm B, \bm j)\\
    &=(\partial_t\bm B, \bm B)+\tau \|\bm j\times \bm B\|^2\\
    &=\frac{1}{2}\l(\frac{\d}{\d t}\mathcal{E} +2\tau \|\bm j\times \bm B\|^2 \r)=0,
\end{align*}
where in the last line we used the enforced energy law. Thus we have
\begin{equation}
    2\lambda_{\cE}\|\bm j\|^2+ 2\lambda_{\cH}(\bm B,\bm j)=0.
\end{equation}
On the other hand, testing \eqref{eqn:lm-con1} with $\bm B$, we obtain 
\begin{align*}
    0= (\partial_t\bm A,\bm B) + (\bm E,\bm B) +2\lambda_{\cE}(\bm j, \bm B)+ 2\lambda_{\cH}\|\bm B\|^2.
\end{align*}
Then using \eqref{eqn:lm-con2}, \eqref{eqn:lm-con4} and \eqref{eqn:lm-con7}, we get 
\begin{align*}
 (\partial_t\bm A,\bm B) + (\bm E,\bm B)&=\frac{1}{2}\frac{\d \cH}{\d t}=0.
\end{align*}
Therefore, we derive another equation for Lagrange multipliers 
\begin{equation}
    2\lambda_{\cE}(\bm j, \bm B)+ 2\lambda_{\cH}\|\bm B\|^2=0.
\end{equation}
To summarize, the two Lagrange multipliers satisfy
    \begin{equation}\label{eq:lm-system}
        \begin{pmatrix}
            \|\bm B\|^2 & (\bm j, \bm B) \\
            (\bm j, \bm B) & \|\bm j\|^2 
        \end{pmatrix}
        \begin{pmatrix}
            \lambda_{\cH}\\
            \lambda_{\cE}
        \end{pmatrix}=
        \begin{pmatrix}
            0\\
            0
        \end{pmatrix}.
    \end{equation}
    Thus, by the Cauchy--Schwarz inequality,
\[
   \|\bm B\|^2\|\bm j\|^2 - (\bm j,\bm B)^2 \ge 0,
\]
with equality iff $\bm j = c\,\bm B$ a.e.\ for a single constant $c$
(or $\bm B = \bm 0$). Hence the determinant is strictly positive
whenever $\bm B$ and $\bm j$ are linearly independent in $L^2(\Omega)$,
and in that case the Lagrange multipliers vanish.
\end{proof}

\begin{corollary}
Assume the solution considered in \Cref{thm:equivalent} is continuous in time. Then the magneto-friction equation \eqref{eqn:magneto-frictional} is equivalent to \eqref{eqn:lm-con} in the sense that any solution to \eqref{eqn:magneto-frictional} combined with $\lambda_{\cE} = \lambda_{\cH} = 0$ solves \eqref{eqn:lm-con}, and any solution to \eqref{eqn:lm-con} satisfies $\lambda_{\cE} = \lambda_{\cH} = 0$ and thus solves \eqref{eqn:magneto-frictional}.
\end{corollary}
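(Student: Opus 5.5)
The corollary has two directions, and I will treat them separately. The first direction is a direct verification: take a solution of \eqref{eqn:magneto-frictional} with the boundary conditions \eqref{eqn:bcs}, and set $\lambda_{\cE}=\lambda_{\cH}=0$. It then suffices to check that every equation of \eqref{eqn:lm-con} holds.

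\begin{itemize}
\item Since $\Omega$ is contractible and $\nabla\cdot\bm B=0$, there is a potential $\bm A$ with $\nabla\times\bm A=\bm B$. Using \eqref{eqn:magnetic-advection}, one can fix the gauge so that $\partial_t\bm A+\bm E=\bm 0$; with $\lambda_{\cE}=\lambda_{\cH}=0$ this is exactly \eqref{eqn:lm-con1}, and \eqref{eqn:lm-con2} holds by construction.
\item Substituting \eqref{eqn:magneto-frictional3} into \eqref{eqn:mf-Ohms} gives \eqref{eqn:lm-con4}, and \eqref{eqn:lm-con5} is simply the definition of $\bm j$.
\item The two scalar laws are the standard ones for the MF equations. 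For \eqref{eqn:lm-con6}, test \eqref{eqn:magnetic-advection} with $\bm B$, integrate by parts using the boundary conditions to get $(\bm E,\bm j)$, and insert \eqref{eqn:lm-con4}. For \eqref{eqn:lm-con7}, note that $\frac{\d}{\d t}\cH=2(\partial_t\bm A,\bm B)=-2(\bm E,\bm B)=0$, because $\bm E\perp\bm B$ pointwise.
\end{itemize}

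The converse is where \Cref{thm:equivalent} does the work. Take a solution of \eqref{eqn:lm-con}. At any time where $\bm B$ and $\bm j$ are linearly independent in $L^2(\Omega)$, the Gram system \eqref{eq:lm-system} is nonsingular, so $\lambda_{\cE}=\lambda_{\cH}=0$. Equation \eqref{eqn:lm-con1} then reduces to $\partial_t\bm A+\bm E=\bm 0$. Taking the curl gives \eqref{eqn:magnetic-advection}, and defining $\bm u=\tau\bm j\times\bm B$ recovers \eqref{eqn:mf-Ohms}--\eqref{eqn:magneto-frictional3}.

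The main obstacle is the degenerate set of times where $\bm j=c\,\bm B$, i.e.\ linear force-free states, or $\bm B=\bm 0$. At such times \eqref{eq:lm-system} leaves a one-parameter family of multipliers $(\lambda_{\cH},\lambda_{\cE})\propto(-c,1)$. Two points control this case.

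\begin{itemize}
\item \textbf{The extra term drops out.} Here $2\lambda_{\cH}\bm B+2\lambda_{\cE}\bm j=2\lambda_{\cE}(\bm j-c\bm B)=\bm 0$, so \eqref{eqn:lm-con1} still reduces to $\partial_t\bm A+\bm E=\bm 0$. The field equations of \eqref{eqn:magneto-frictional} therefore hold at every time, degenerate or not.
\item \textbf{The multipliers are zero in the limit.} By the continuity-in-time assumption, I expect to argue as follows. If the degenerate set has empty interior, then the multipliers vanish on a dense set of times, and continuity forces them to vanish everywhere. If the degenerate set contains an interval, then $\bm E=-\tau(\bm j\times\bm B)\times\bm B=\bm 0$ there, the state is stationary, and the multipliers enter the equations only through a vanishing combination. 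On such an interval one identifies them with $0$; this is the natural choice, consistent with continuity, and it is the point I would state carefully as part of the interpretation of ``solution''.
\end{itemize}

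In either case the solution of \eqref{eqn:lm-con} solves \eqref{eqn:magneto-frictional} with $\lambda_{\cE}=\lambda_{\cH}=0$, which completes the equivalence.
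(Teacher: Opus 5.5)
\textbf{Comparison with the paper.} Your argument follows the same route as the paper: \Cref{thm:equivalent} forces $\lambda_{\cE}=\lambda_{\cH}=0$ whenever $\bm B$ and $\bm j$ are linearly independent, and continuity in time is used for the remaining times. Your version is more careful than the paper's brief proof, which does not check the forward direction and does not explain why the field equations hold at exceptional times.

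Your observation that $2\lambda_{\cH}\bm B+2\lambda_{\cE}\bm j=2\lambda_{\cE}(\bm j-c\bm B)=\bm 0$ on the degenerate set is what justifies ``and thus solves \eqref{eqn:magneto-frictional}'' regardless of the multiplier values. You also identify the exceptional set more accurately, as the linear force-free states rather than the paper's ``stationary states''. Nonlinear force-free equilibria are stationary but nondegenerate, so \Cref{thm:equivalent} applies to them directly.

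\textbf{Two small fixes.} First, for $\bm B=\bm 0$ the kernel of \eqref{eq:lm-system} is all of $\mathbb{R}^2$, not the line through $(-c,1)$. This is harmless, since the multiplier term vanishes anyway. Second, in the forward direction you need the boundary terms behind $\frac{\d}{\d t}\cH=2(\partial_t\bm A,\bm B)$ and behind the energy identity to vanish. They do, because $\bm u$ and $\bm B$ are tangential, so $\bm E=-\bm u\times\bm B$ satisfies $\bm E\times\bm n=\bm 0$ on $\partial\Omega$. In your gauge this gives $\partial_t\bm A\times\bm n=\bm 0$ as well.

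\textbf{The interval case.} Here continuity does not force $\lambda_{\cE}=\lambda_{\cH}=0$. In fact the literal claim that every solution of \eqref{eqn:lm-con} has vanishing multipliers is false there, so it cannot be proved. The paper's appeal to continuity has the same limitation: it only reaches degenerate times that are limits of nondegenerate ones.

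For example, $\bm B=\bm A=\bm E=\bm j\equiv\bm 0$ with $\lambda_{\cE}=\lambda_{\cH}\equiv 1$ satisfies every equation of \eqref{eqn:lm-con}. The same freedom appears on any time interval during which a solution rests at a state with $\bm j=c\bm B$: any continuous $\lambda_{\cE}$ with $\lambda_{\cH}=-c\lambda_{\cE}$ works.

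What your dichotomy actually proves is the sharp statement. Every solution of \eqref{eqn:lm-con} solves \eqref{eqn:magneto-frictional}, and its multipliers vanish on the closure of the nondegenerate times, i.e.\ everywhere outside the interior of the degenerate set. On that interior the multipliers are undetermined, and setting them to zero is a normalization. Write it that way, rather than as a ``natural choice consistent with continuity''.
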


\begin{proof}
     \Cref{thm:equivalent} implies that before reaching a stationary state, the Lagrange multipliers in  \eqref{eqn:lm-con} vanish. Thus the solutions to the two systems are equivalent. Since the solutions are assumed to be continuous in time, this equivalence also extends to stationary states.
\end{proof}

\subsection{Full discretization}

We discretize \eqref{eqn:lm-con} using finite element exterior calculus in space and implicit Euler time stepping.

\begin{problem}[Lagrange multiplier scheme]
   For each time step $n\ge 0$, we find $$(\bm B^{n+1}_h, \bm A^{n+1}_h,\bm  E^{n+1}_h, \bm j^{n+1}_h)\in \bm H_0^h(\div) \times [\bm H_0^h(\curl)]^3$$  and $ (\lambda_{\cE,h}^{n+1},\lambda_{\cH,h}^{n+1})$\  $\in \mathbb{R}^2$ such that for $(\bm C_h, \bm D_h,\bm F_h,\bm k_h)\in \bm H_0^h(\div)\times [\bm H_0^h(\curl)]^3$,
\begin{subequations}\label{eqn:lm-full}
\begin{align}\label{eqn:lm-full1}
\begin{split}
    &\l(\frac{\bm A_h^{n+1} - \bm A_h^n}{\Delta t}, \bm D_h \r)+ (\bm E_h^{n+1}, \bm D_h) \\
    & \qquad+ \lambda_{\cH,h}^{n+1}(\bm B_h^{n+1}, \bm D_h)+\lambda_{\cE,h}^{n+1}(\bm j^{n+1}_h,  \bm D_h) = 0,
\end{split}
\end{align}
\vspace{-13pt}
 \begin{align}
 \qquad\qquad\qquad\ (\bm B^{n+1}_h, \bm C_h)-(\nabla\times \bm A^{n+1}_h, \bm C_h) &=0  \label{eqn:lm-full2}, \\
(\bm E^{n+1}_h,\bm  F_h) + \tau((\bm j_h^{n+1}\times \bm B_h^{n+1})\times \bm B_h^{n+1},\bm  F_h) &= 0,\label{eqn:lm-full4}\\
(\bm j_h^{n+1}, \bm k_h)- (\bm B_h^{n+1}, \nabla\times \bm k_h)&=0,\label{eqn:lm-full5}\\
\frac{\mathcal{E}_h^{n+1}- \mathcal{E}_h^{n}}{\Delta t} + 2\tau \|\bm B^{n+1}_h\times \bm j^{n+1}_h\|^{2} &= 0,\label{eqn:lm-full6}\\
\mathcal{H}^{n+1}_h-\mathcal{H}^{n}_h &= 0.\label{eqn:lm-full7}
\end{align}
\end{subequations}
\end{problem}

\smallskip

\begin{theorem}\label{thm:SP}
    Let $(\bm B_h^{n+1}, \bm A_h^{n+1},\bm  E_h^{n+1}, \bm j_h^{n+1})$ and $ (\lambda_{\cE,h}^{n+1},\lambda_{\cH,h}^{n+1})$ be a solution of \eqref{eqn:lm-full}. The energy law \eqref{energy-law} and the discrete Gauss law hold. Moreover, the global helicity is conserved:
     \begin{equation}
            \cH^{n+1}_h= \cH^n_h= \ldots = \cH^0_h,\qquad n\ge 0,
        \end{equation}
        and the discrete Arnold's inequality holds.
\end{theorem}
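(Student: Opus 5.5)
The energy law and global helicity conservation follow almost by construction, since the scheme \eqref{eqn:lm-full} imposes them directly as the scalar equations \eqref{eqn:lm-full6} and \eqref{eqn:lm-full7}; I will read off the remaining claims from these and from the exactness of the subcomplex \eqref{eq:subcomplex}.

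\textbf{Energy law.} Here $\cE^n_h=(\bm B_h^n,\bm B_h^n)$. Equation \eqref{eqn:lm-full6} gives
\[
\cE_h^{n+1}=\cE_h^n-2\tau\Delta t\|\bm B_h^{n+1}\times\bm j_h^{n+1}\|^2\le\cE_h^n,
\]
because $\tau>0$ and $\Delta t>0$. Induction on $n$ then gives \eqref{energy-law}.

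\textbf{Helicity.} Equation \eqref{eqn:lm-full7} directly gives $\cH_h^{n+1}=\cH_h^n$, and induction gives the chain of equalities down to $\cH_h^0$.

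\textbf{Discrete Gauss law.} Since $\nabla\times\bm H_0^h(\curl)\subset\bm H_0^h(\div)$, we may take $\bm C_h=\bm B_h^{n+1}-\nabla\times\bm A_h^{n+1}$ in \eqref{eqn:lm-full2}. This gives $\bm B_h^{n+1}=\nabla\times\bm A_h^{n+1}$ exactly, not merely in a projected sense. Because $\nabla\cdot\nabla\times=0$, it follows that $\nabla\cdot\bm B_h^{n+1}=0$. A useful by-product is that $\bm A_h^{n+1}$ is a genuine discrete vector potential of $\bm B_h^{n+1}$. Hence $\cH_h^{n+1}=(\bm A_h^{n+1},\bm B_h^{n+1})$ coincides with the helicity definition used in \Cref{thm:Mix}. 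This is precisely why the scheme evolves $\bm A$ rather than $\bm B$.

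\textbf{Discrete Arnold inequality (main step).} The helicity is invariant under gauge changes $\bm A_h\mapsto\bm A_h+\nabla\phi_h$ with $\phi_h\in H_0^{1,h}$. Indeed, $(\nabla\phi_h,\bm B_h)=-(\phi_h,\nabla\cdot\bm B_h)=0$ by the Gauss law just proved. I therefore replace $\bm A_h^n$ by its component discretely orthogonal to $\nabla H_0^{1,h}$, that is, by the discretely divergence-free potential. For this potential I invoke the discrete Poincaré (Friedrichs) inequality for the subcomplex \eqref{eq:subcomplex}, which holds because the complex is exact and bounded cochain projections exist:
\[
\|\bm A_h\|\le c_P\|\nabla\times\bm A_h\|=c_P\|\bm B_h\|,
\]
with $c_P$ independent of $h$. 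Cauchy--Schwarz then gives
\[
|\cH_h^n|\le\|\bm A_h^n\|\|\bm B_h^n\|\le c_P\cE_h^n,
\]
which is the inequality with $C=c_P$, exactly as in \cite{he2025helicity} for \Cref{thm:Mix}.

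The only genuine subtlety is making sure that $\cH_h$ in \eqref{eqn:lm-full7} is evaluated with the exact potential, so that gauge invariance and the Poincaré inequality apply. That is secured by the exactness of $\bm B_h=\nabla\times\bm A_h$ established above. For $n=0$ I assume the initial data are compatible, i.e.\ $\bm B_h^0=\nabla\times\bm A_h^0$.
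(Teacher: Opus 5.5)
Your proposal is correct and follows the same route as the paper. The energy law and helicity conservation are read off from \eqref{eqn:lm-full6}--\eqref{eqn:lm-full7}, the identity $\bm B_h^{n+1}=\nabla\times\bm A_h^{n+1}$ from \eqref{eqn:lm-full2} gives the Gauss law, and Cauchy--Schwarz plus the discrete Poincar\'e inequality gives the Arnold bound. Your explicit gauge-fixing step is a worthwhile refinement: you replace $\bm A_h$ by its component orthogonal to $\nabla H_0^{1,h}$, using $(\nabla\phi_h,\bm B_h)=0$. The paper instead applies the Poincar\'e inequality directly to $\bm A_h^{n+1}$, whose gradient part the scheme does not control, since testing \eqref{eqn:lm-full1} and \eqref{eqn:lm-full4} with $\nabla\phi_h$ shows it generally drifts.
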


\begin{proof}
      The energy-decreasing and the helicity-preserving properties are direct consequences of \eqref{eqn:lm-full6} and \eqref{eqn:lm-full7} respectively. Moreover, from \eqref{eqn:lm-full2}, we have
    \[\bm B^{n+1}_h=\nabla\times \bm A^{n+1}_h.\]
    Taking the divergence, we get $\nabla\cdot \bm B^{n+1}_h=0$. Finally, the discrete Arnold inequality follows from  \eqref{eqn:lm-full2} and the discrete Poincar\'e inequality
    \begin{align*}
        |\cH^{n+1}_h|=|(\bm A^{n+1}_h,\bm B^{n+1}_h)|
        &\le \|\bm A^{n+1}_h\|\|\bm B^{n+1}_h\|\\
        &\le C\|\nabla\times \bm A^{n+1}_h\|\|\bm B^{n+1}_h\|\\
        &\le C\|\bm B^{n+1}_h\|^2=C\mathcal{E}_h^{n+1},
    \end{align*}
	    where $C$ is a positive constant independent of $n$.
\end{proof}

\smallskip
\begin{remark}
  As the system approaches a force-free equilibrium, $\bm j\times\bm B\to\bm 0$
  and the enforced dissipation $2\tau\|\bm j\times\bm B\|^2\to 0$, so the energy
  constraint \eqref{eqn:lm-full6} loses sensitivity to $\lambda_{\cE}$ and the
  multiplier becomes ill-determined, making the Lagrange multiplier scheme
  \eqref{eqn:lm-full} difficult to converge near the steady state. In practice, the Lagrange
  multiplier approach must be modified to support long-time evolution \cite{garcke2025structure,garcke2025structure_twophase,
  cheng2020global,cheng2020new}. If the discrete energy dissipation rate
  $\Delta \mathcal{E}_h^{n+1}
  =\frac{\mathcal E_h^n-\mathcal E_h^{n+1}}{\Delta t}$ is greater than $\gamma$, where $\gamma\ll 1$, we continue with the two-multiplier method as presented. Otherwise, we set $\lambda_{\cE} = 0$ and omit \eqref{eqn:lm-full6}, and only preserve global
  helicity. In the subsequent numerical experiments, we always set $\gamma=9\times 10^{-5}$. In addition, to improve robustness before this threshold is reached,
  we perform the same strategy if the dissipation rate increases for two consecutive time steps, subject to the relative tolerance $10^{-4}$.
  \end{remark}

\subsection{Solver}
For each time step, we apply Newton linearization for the coupled system \eqref{eqn:lm-full}. The linearized Newton system for the unknowns 
\begin{equation}
    \left(\bm B^{n+1}_h,\bm A^{n+1}_h, \bm E^{n+1}_h, \bm j^{n+1}_h,
\lambda_{\mathcal E,h}^{n+1}, \lambda_{\mathcal H,h}^{n+1}\right)
\end{equation}
naturally has a saddle point structure due to the presence of the Lagrange multipliers. We group the degrees of freedom into a vector of \emph{physical fields},
\begin{equation}
x_{\mathrm f} 
= (\bm B, \bm A, \bm E, \bm j),    
\end{equation}
and a vector of \emph{Lagrange multipliers},
\begin{equation}
x_{\lambda} = (\lambda_{\mathcal E}, \lambda_{\mathcal H}),    
\end{equation}
which yields a $2\times2$ block system
\begin{equation}
\label{eq:block-system}
\begin{pmatrix}
A & B \\
C & 0
\end{pmatrix}
\begin{pmatrix}
x_{\mathrm f} \\[2pt]
x_{\lambda}
\end{pmatrix}
=
\begin{pmatrix}
f_{\mathrm f} \\[2pt]
f_{\lambda}
\end{pmatrix}.
\end{equation}
The right-hand side functions $f_{\mathrm{f}}$ and $f_{\mathrm{\lambda}}$ are the residual in the Newton iterations. We solve~\eqref{eq:block-system} with a block preconditioner. We first use flexible GMRES \cite{saad1993flexible} as the outermost solver. We then split the physical fields from the Lagrange multipliers. More precisely, we use the full block factorization preconditioner \cite{murphy2000,ipsen2001}
\begin{equation}
\label{eq:full-schur}
\mathcal{P}^{-1}
=
\begin{pmatrix}
I & 0 \\[4pt]
C A^{-1} & I
\end{pmatrix}
\begin{pmatrix}
A & 0 \\[4pt]
0 & S
\end{pmatrix}
\begin{pmatrix}
I & A^{-1} B \\[4pt]
0 & I
\end{pmatrix},
\end{equation}
where $S = -\,C A^{-1} B$ is the Schur complement. The physical block is then solved by a direct solver; we employ MUMPS~\cite{amestoy2001}. The Schur complement involves only two scalar multipliers. A GMRES with a maximum $2$ iterations is applied, which is sufficient in exact arithmetic. All operations are performed in a matrix-free way, except for the direct solve of $A$. The above solver is implemented using PETSc's built-in Schur complement infrastructure \cite{petsc-user-ref}. 

\section{Numerical experiments}\label{sec:Numerical-experiment}
We simulate magnetic relaxation with two kinds of magnetic configurations, the magnetic braids and the magnetic knots. In particular, we consider two types of magnetic braids, Wilmot-Smith (WS) (\Cref{subsub:WS}) and Candelaresi-Pontin-Hornig (CPH) (\Cref{subsub:CPH}). For magnetic knots, we choose the Hopf fibration (\Cref{subsec:knots}). We do so using the three schemes considered: the non-conservative scheme \eqref{eq:nonconservative-scheme} that preserves no helicity constraint, the projection-based scheme \eqref{eq:mixed-scheme} with local helicity constraints, and the Lagrange multiplier method \eqref{eqn:lm-full} with the global helicity constraint.

The computational domain is chosen to be a cuboid
\begin{equation}
    \Omega = (-4, 4)^2 \times (-Z, Z),
\end{equation}
where $Z = 10$ for the magnetic knots and $Z = 24$ for the magnetic braids. The coupling parameter $\tau$ sets the relaxation rate of the magneto-frictional dynamics ($\tfrac{\d}{\d t}\mathcal{E}=-2\tau\|\bm j\times \bm B\|^2$). It controls the speed of convergence to the equilibrium $\bm j\times \bm B=\bm 0$ but not the equilibrium itself. We set $\tau = 1$ and $T=10000$. 


Unless stated otherwise, we employ a coarse mesh consisting of $8\times 8\times 10$ hexahedral cells for the magnetic knots and $4\times 4\times 24$ hexahedral cells for the magnetic braids in the $x \times y \times z$ directions. For spatial discretization, we use, in each case, the lowest-order N\'ed\'elec edge and face elements of the first kind belonging to the same de~Rham complex.
These under-resolved discretizations stress-test the structure-preserving properties of the algorithms.

The projection of the initial data to the discretely divergence-free subspace of $\bm H_0^h(\div)$ and the evaluation of the helicity are the same as our previous work \cite[Section 4.1]{he2025helicity}.

\subsection{Magnetic braids}\label{subsec:braids}
Different braided configurations can be constructed by prescribing the
locations and signs of the localized twists. In the Wilmot--Smith (WS)
configuration~\cite{wilmot-smith2009}, the local twists have alternating
signs, so that the signed twisting cancels. In the
Candelaresi--Pontin--Hornig (CPH) configuration~\cite{Candelaresi2015},
the local twists have the same sign, so that the twists add coherently.

Although these configurations have been widely used as model braided magnetic
fields, their helicity is not usually computed explicitly in the existing
literature. The main difficulty is that these braid fields are open magnetic
configurations: the magnetic field has non-zero flux through the top and bottom
faces $( \bm B\cdot \bm n \neq 0)$. 
Therefore, the classical magnetic helicity is not directly gauge invariant and is not a suitable helicity invariant without further modification. One standard way to obtain a well-defined helicity for
such open fields is to use relative helicity~\cite{berger1984topological}.
In this work, we consider taking the periodic boundary conditions on the top and bottom faces, then we can use the generalized helicity introduced in our previous
work~\cite{he2025helicity}. For the Hodge decomposition of the full magnetic field 
\begin{equation}
    \bm B_h = \nabla\times \bm A_h + \bm B_H,
\end{equation}
where
\begin{equation}
    \bm B_H = B_0 \mathbf e_z ,
\end{equation}
the discrete generalized helicity is defined by
\begin{equation}
    \tilde{\mathcal H}_h
    :=
	    (\bm A_h, \bm B_h+\bm B_H).
\end{equation}
This quantity is well-defined for the braid configurations considered here and
provides a computable helicity in the finite element setting. 
In practice, for periodic braids, the Lagrange multiplier scheme preserves
\(\widetilde{\mathcal H}_h\) by imposing the scalar constraint on $\tilde{\mathcal H}_h$ whose variation is $\frac{\delta \tilde{\mathcal H}_h}{\delta \bm A_h}=2\bm B_h$. The projection-based scheme instead preserves this quantity through the same
discrete local-helicity mechanism, with the fixed harmonic component included
when evaluating \(\tilde{\mathcal H}_h\).

As shown in the
numerical experiments below, it distinguishes the two choices of local twists:
the alternating-sign WS braid has vanishing generalized helicity, whereas the
same-sign CPH braid has non-zero generalized helicity. In this sense, the generalized helicity detects the coherent twisting in the CPH
braid, while it vanishes for the alternating-sign WS braid. Moreover, this
generalized helicity is associated with a generalized Arnold inequality in the
discrete setting \cite[Theorem~3.8]{he2025helicity}, so that a non-zero value provides a lower bound on the
relaxation energy and hence an effective topological barrier. The harmonic
component remains constant according to \cite[Theorem~3.4]{he2025helicity},
contributing only a constant background energy.

\begin{remark}
   In the present work, we have fixed the boundary conditions of magnetic braids to be periodic for brevity and do not address their influence on the equilibrium structure. Though theoretical existence of equilibria is open for MF, numerical evidence \cite{pontin2016braided,yeates2019magnetohydrodynamic} show that the choice of boundary conditions (line-tied, periodic and closed) can fundamentally alter the nature of braided equilibria. We leave a systematic investigation of this dependence to future work.
\end{remark}
\subsubsection{Wilmot-Smith (WS) configuration}\label{subsub:WS}
The WS configuration for $E^3$-field \cite{wilmot-smith2009,pontin2016braided} is constructed by concatenating three identical elementary units, each consisting of one positive and one negative twist superimposed on a uniform background field. Therefore, the global helicity vanishes by construction. 
Note that Arnold's inequality does not guarantee a topological barrier for the $E^3$-field and thus a physically faithful simulation is even more challenging (than initial fields with non-zero helicity). The initial magnetic configuration is
\begin{align}
\bm B^{\text{WS}}(0)
= B_0 \mathbf{e}_z
&+ \sum_{c=1}^{6} \frac{2 kk_c B_0}{a} 
\left( -(y - y_c)\,\mathbf{e}_x + (x - x_c)\,\mathbf{e}_y \right)
\times\\ &\qquad\qquad \exp\left[
 -\frac{(x - x_c)^2}{a^2}
 -\frac{(y - y_c)^2}{a^2}
 -\frac{(z - z_c)^2}{l^2}
\right],\notag
\end{align}
with the initial field strength $B_0$, strength of twist $k$, radius and length in the $z$-direction of the twist region $a$ and $l$, respectively. The twist locations are $(x_c, y_c, z_c)$. Take $x_c = k_c = \{1, -1, 1, -1, 1, -1\}
,
y_c = \{0, \dots, 0\},
z_c = \{-20, -12, -4, 4, 12, 20\},
$ and
$a = \sqrt{2},
l = 2,
B_0 = 1$, $k=5$. For the time stepping, we choose $\Delta t = 0.1$ at the beginning of $200$ time steps and then change to a larger time step $\Delta t = 100$. This empirical adjustment improves the robustness of the nonlinear solver while allowing us to reach the long final time more efficiently. We monitor the total energy, the generalized helicity $\tilde{\mathcal{H}}_h$ and the background energy $\|\bm B_H\|^2$. 
\begin{figure}[htpb]
\centering
\renewcommand{\arraystretch}{1.9}
\def\temptablewidth{2.9\textwidth}
\setlength{\tabcolsep}{6pt}
\begin{tabular}{p{0.16\textwidth}| c c}
\hline
Schemes & $\mathcal{E}_h$, $\widetilde{\mathcal{H}}_h$ & Errors \\ \hline
\vspace{-3cm} Non-conservative scheme \eqref{eq:nonconservative-scheme} with no constraint
& \includegraphics[width=0.33\textwidth]{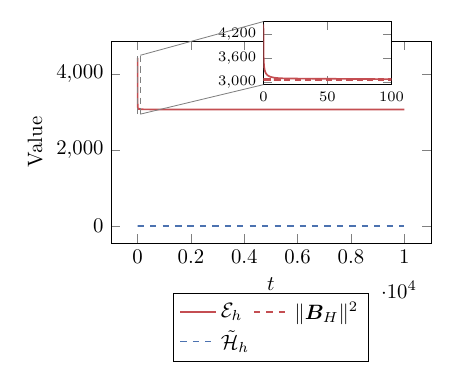}
& \includegraphics[width=0.33\textwidth]{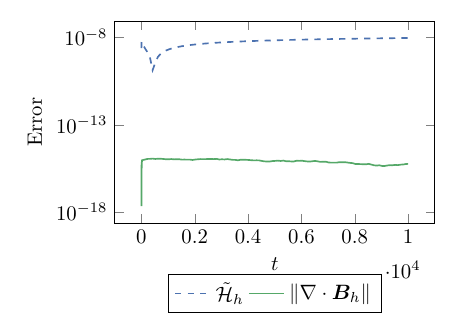} \\
\hline
\vspace{-3cm} Lagrange multiplier scheme \eqref{eqn:lm-full} with global constraint
& \includegraphics[width=0.33\textwidth]{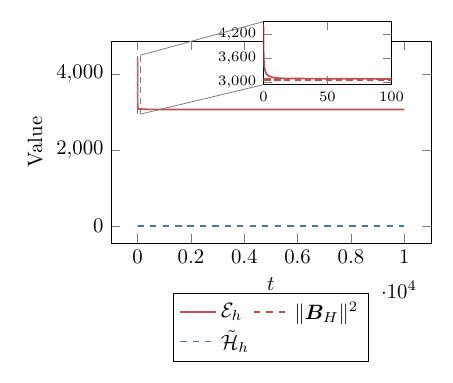}
& \includegraphics[width=0.33\textwidth]{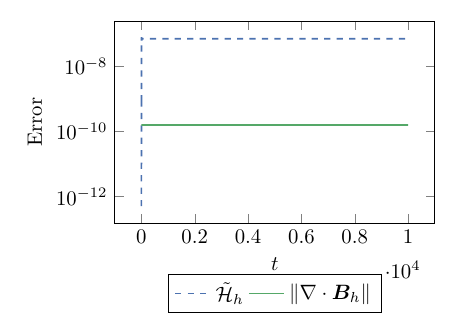} \\
\hline
\vspace{-3cm} Projection-based scheme \eqref{eq:mixed-scheme} with local constraints
& \includegraphics[width=0.33\textwidth]{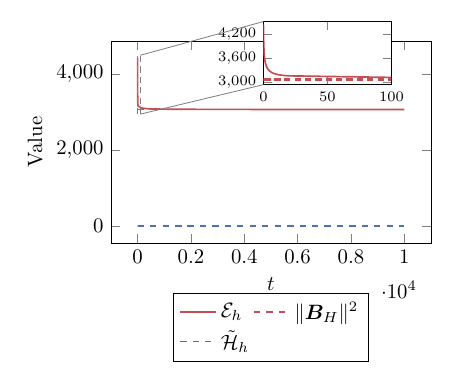}
& \includegraphics[width=0.33\textwidth]{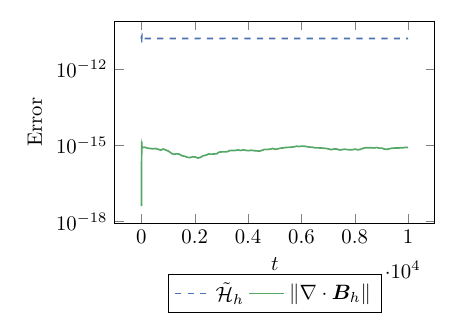}\\
\hline
\end{tabular}
\caption{Magnetic braids (WS): evolution of the total energy, generalized helicity, and errors $|Q-Q(0)|$ for $\tilde{\mathcal{H}}_h$ and $\|\nabla\cdot  \bm B_h\|$.}
\label{fig:e3-alternating}
\end{figure}

\begin{figure}[htbp]
\centering
\renewcommand{\arraystretch}{1.7}
\setlength{\tabcolsep}{2pt}
\begin{tabular}{p{1.8cm}<{\centering}| c c c }
\hline
   Schemes & $t=0$ & $t=10$ & $t=10000$ \\ \hline
 \vspace{-5cm} Non-con\-ser\-va\-tive scheme \eqref{eq:nonconservative-scheme} with no constraint
    & \includegraphics[scale=0.135]{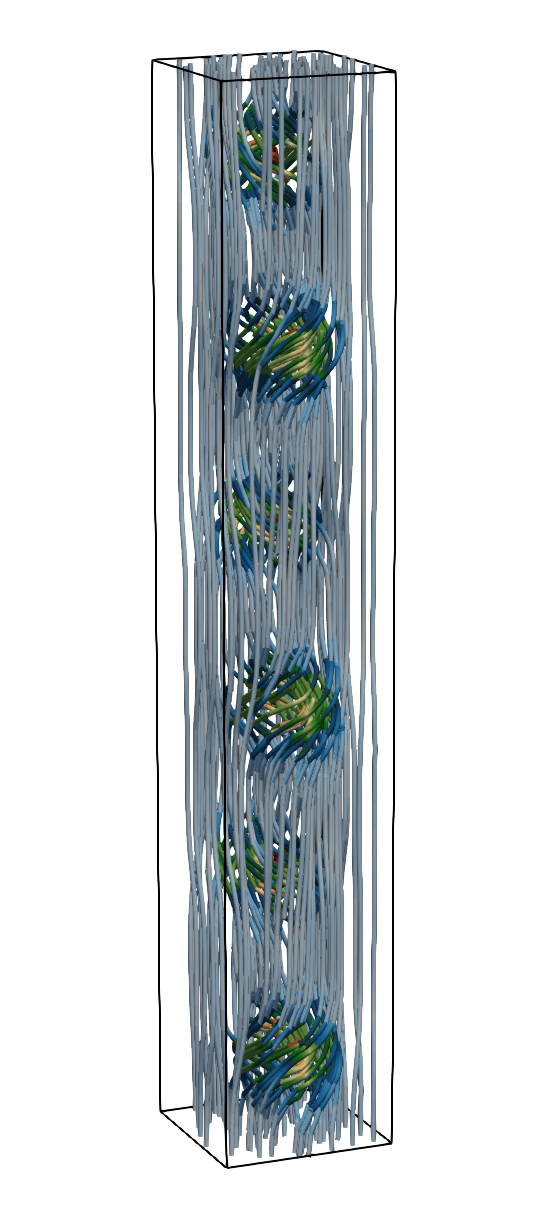}
    &\hspace{-0.3cm} \includegraphics[scale=0.135]{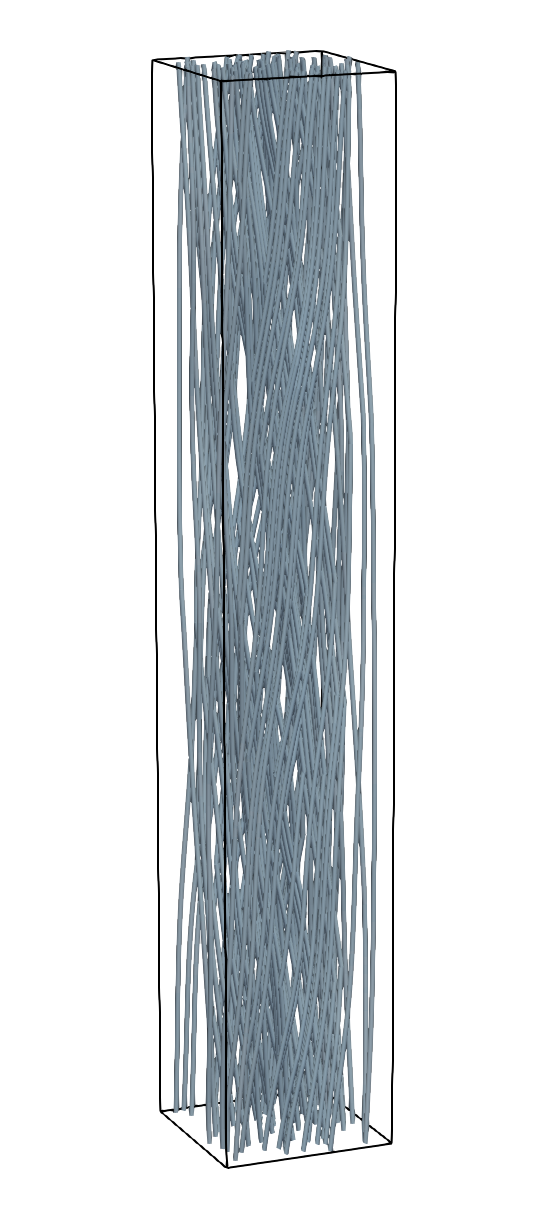}
    &\hspace{-0.3cm} \includegraphicswithlegend[0.83]{.24\textwidth}{images/hdiv-e3-alter-t=10000}{3.1}{0.97}  \\ \hline
    \vspace{-5cm} Lagrange multiplier scheme \eqref{eqn:lm-full} with global constraint
    & \includegraphics[scale=0.135]{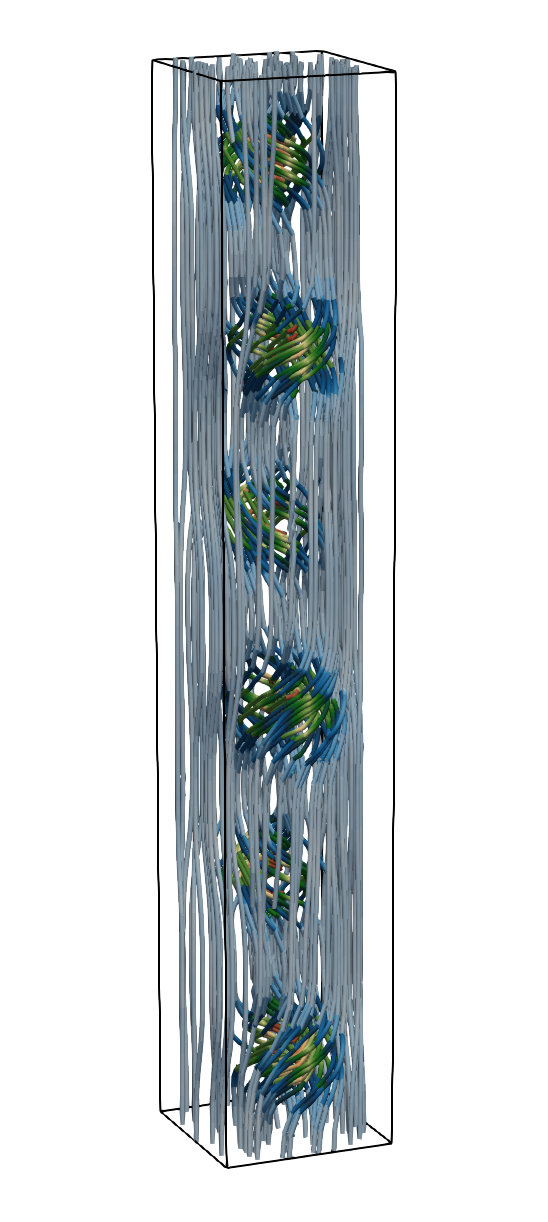}
    &\hspace{-0.3cm} \includegraphics[scale=0.135]{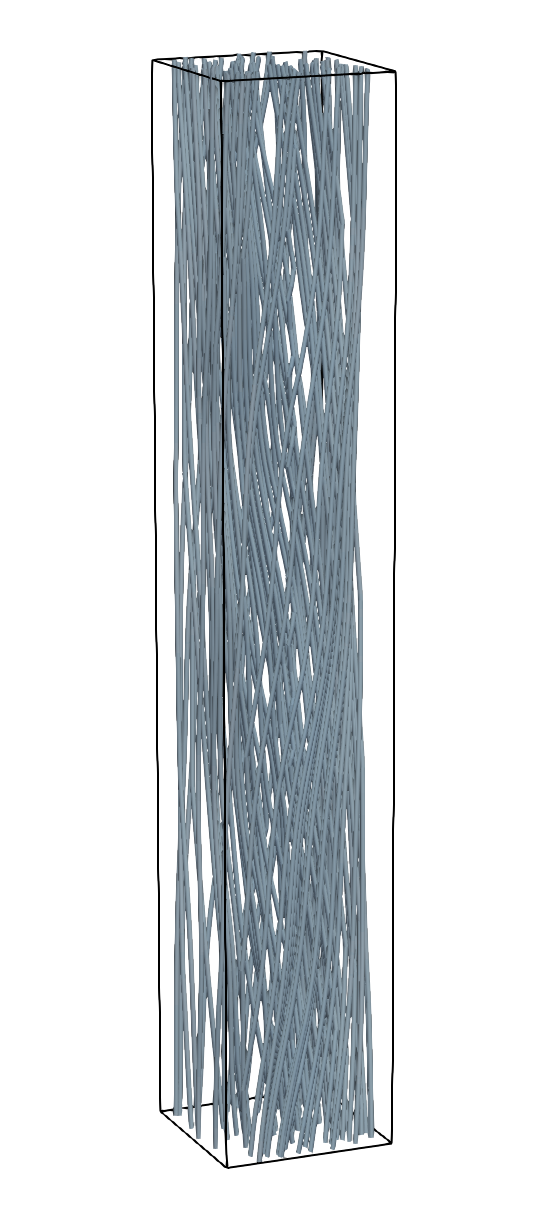}
    &\hspace{-0.3cm} \includegraphicswithlegend[0.86]{.23\textwidth}{images/lm-e3-alter-t=10000}{3.1}{0.97} \\ \hline
     \vspace{-5cm} Projection-based scheme \eqref{eq:mixed-scheme} with local constraints
    &  \includegraphics[scale=0.135]{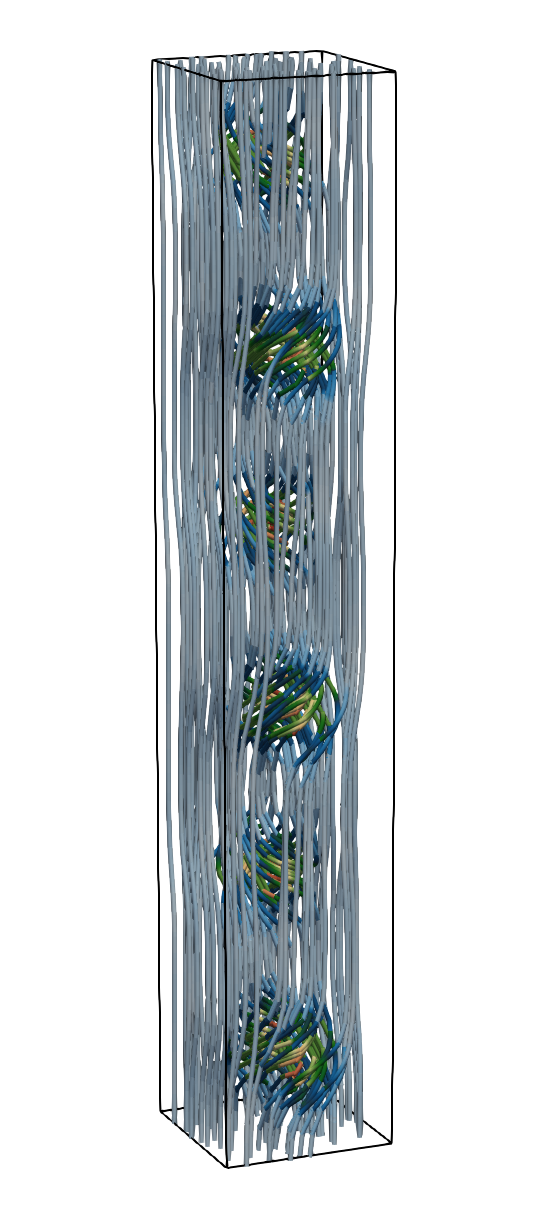}
    &\hspace{-0.3cm} \includegraphics[scale=0.135]{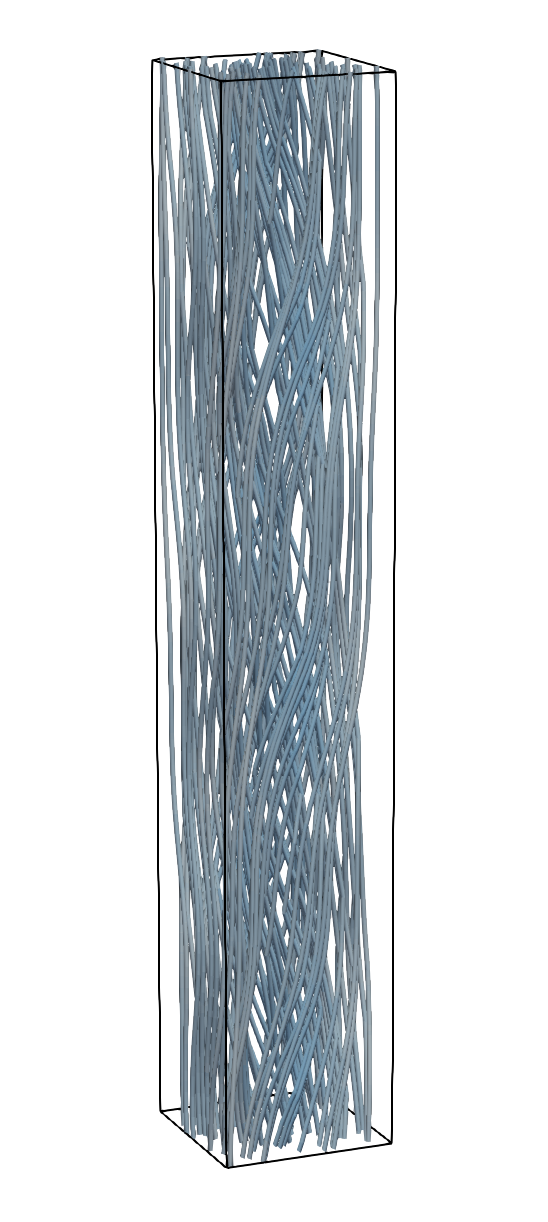}
    &\hspace{-0.3cm} \includegraphicswithlegend[0.86]{.23\textwidth}{images/mixed-e3-alter-t=10000}{3.1}{0.97} \\ \hline
\end{tabular}
\caption{\emph{Magnetic braids} (WS): comparison of evolution of stream tubes of the magnetic field under different topological constraints, colored by magnetic field strength $\|\bm B_h\|$.}
\label{fig:e3-alter-simulation}
\end{figure}

\begin{figure}[htpb]
    \centering
    \includegraphics[width=0.8\linewidth]{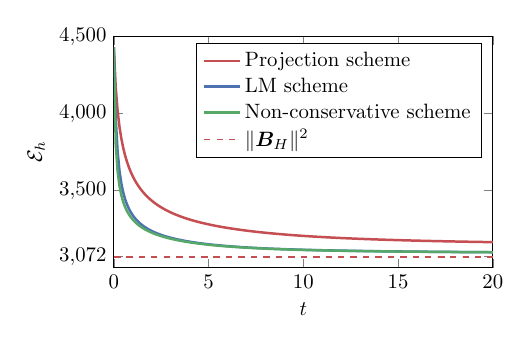}
    \caption{Zoom-in relaxation path of (WS): the projection scheme tends to be slower than the other two schemes.}
    \label{fig:plot-e3-comparison}
\end{figure}

\Cref{fig:e3-alternating} illustrates the evolution of the total energy, generalized helicity, and errors. All three schemes relax to a 
trivial uniform background state but in different paths. \Cref{fig:e3-alter-simulation} demonstrates that magnetic reconnection occurs in all three schemes, so that the
braided structures gradually untangle and the magnetic field relaxes toward a nearly
rectilinear, unbraided configuration. 

Since Arnold’s inequality gives no positive lower bound for the magnetic energy, in this case, the Lagrange multiplier method behaves much closer to an
unconstrained relaxation, whereas the projection-based method follows a slower relaxation pathway. (see \Cref{fig:plot-e3-comparison}). This example shows that preserving helicity, even at the local discrete level,
does not by itself guarantee preservation of nontrivial braided topology during
the relaxation.

\subsubsection{Candelaresi-Pontin-Hornig (CPH) configuration}\label{subsub:CPH}
We change the local twist to all positive as in \cite{Candelaresi2015}. The initial magnetic configuration is 
\begin{align}
\bm B^{\text{CPH}}(0)
= B_0 \mathbf{e}_z
&+ \sum_{c=1}^{6} \frac{2 k B_0}{a} 
\left( -(y - y_c)\,\mathbf{e}_x + (x - x_c)\,\mathbf{e}_y \right)
\times\\ &\qquad\qquad \exp\left[
 -\frac{(x - x_c)^2}{a^2}
 -\frac{(y - y_c)^2}{a^2}
 -\frac{(z - z_c)^2}{l^2}
\right],\notag
\end{align}
with the initial field strength $B_0$, strength of twist $k$, radius and length in the $z$-direction of the twist region $a$ and $l$, respectively. The twist locations are $(x_c, y_c, z_c)$. We choose $x_c = \{1, -1, 1, -1, 1, -1\}
,
y_c = \{0, \dots, 0\},
z_c = \{-20, -12, -4, 4, 12, 20\},
$ and
$a = \sqrt{2},
l = 2,
B_0 = 1$, $k=5$. For the time stepping, we choose $\Delta t = 0.1$ at the beginning of $200$ time steps and then change to a larger time step $\Delta t = 100$. We monitor the total energy and the generalized helicity. 

\begin{figure}[htpb]
\centering
\renewcommand{\arraystretch}{1.9}
\def\temptablewidth{2.9\textwidth}
\setlength{\tabcolsep}{6pt}
\begin{tabular}{p{0.16\textwidth}| c c}
\hline
Schemes & $\mathcal{E}_h$, $\tilde{\mathcal{H}}_h$ & Errors \\ \hline
\vspace{-3cm} Non-conservative scheme \eqref{eq:nonconservative-scheme} with no constraint
& \includegraphics[width=0.33\textwidth]{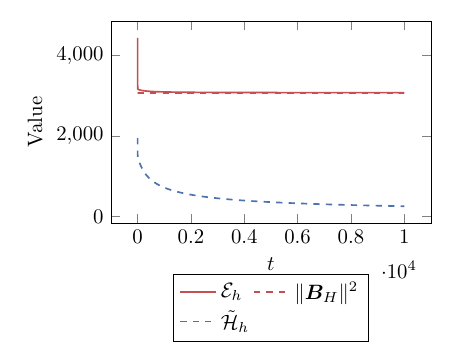}
& \includegraphics[width=0.33\textwidth]{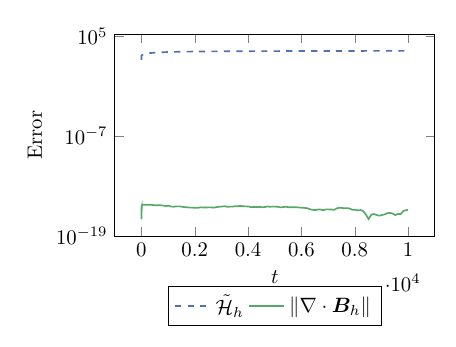} \\ \hline
  \vspace{-3cm} Lagrange multiplier scheme \eqref{eqn:lm-full} with global constraint
& \includegraphics[width=0.33\textwidth]{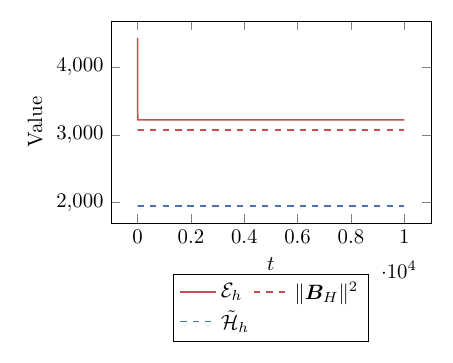}
& \includegraphics[width=0.33\textwidth]{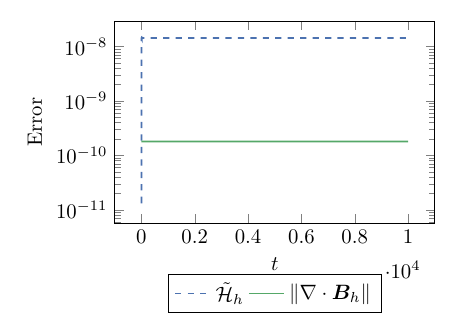} \\ \hline
 \vspace{-3cm} Projection-based scheme \eqref{eq:mixed-scheme} with local constraints
& \includegraphics[width=0.33\textwidth]{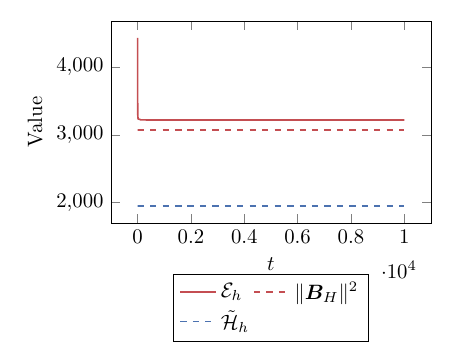}
& \includegraphics[width=0.33\textwidth]{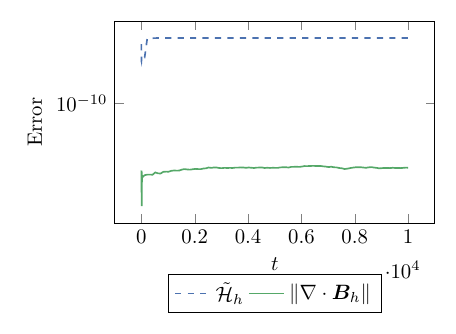}\\ \hline
\end{tabular}
\caption{\emph{Magnetic braids} (CPH): evolution of total energy, background energy $\|\bm B_H\|^2$, generalized helicity, and errors $|Q-Q(0)|$ for $\tilde{\mathcal{H}}_h$ and $\|\nabla\cdot  \bm B_h\|$.}
\label{fig:e3-positive-eh-errors}
\end{figure}

\begin{figure}[htbp]
\centering
\renewcommand{\arraystretch}{1.7}
\setlength{\tabcolsep}{2pt}
\begin{tabular}{p{1.8cm}<{\centering}| c c c }
\hline
   Schemes & $t=0$ & $t=10$ & $t=10000$ \\ \hline
 \vspace{-5cm} Non-con\-ser\-va\-tive scheme \eqref{eq:nonconservative-scheme} with no constraint
    & \includegraphics[scale=0.135]{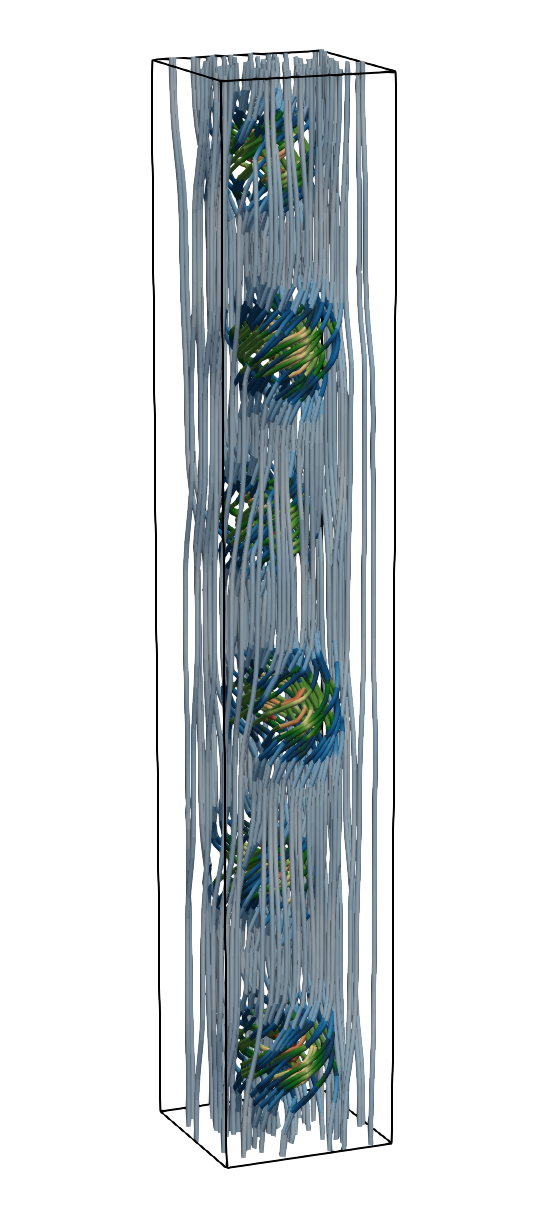}
    &\hspace{-0.3cm} \includegraphics[scale=0.135]{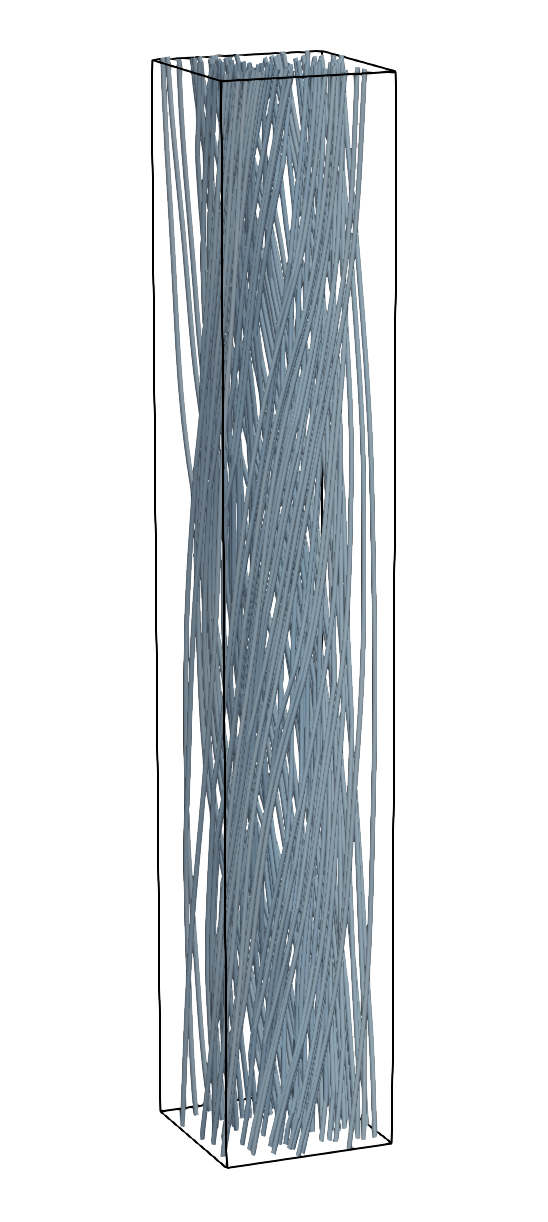}
    &\hspace{-0.3cm} \includegraphicswithlegend[0.83]{.24\textwidth}{images/hdiv-e3-positive-t=10000}{3.1}{0.97}  \\ \hline
    \vspace{-5cm} Lagrange multiplier scheme \eqref{eqn:lm-full} with global constraint
    & \includegraphics[scale=0.135]{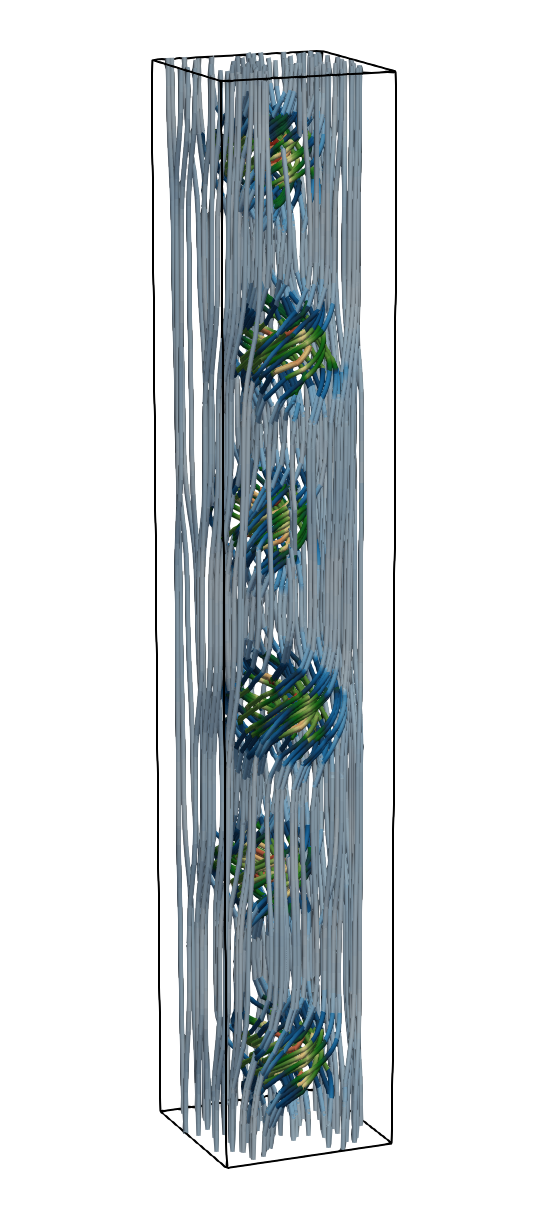}
    &\hspace{-0.3cm} \includegraphics[scale=0.135]{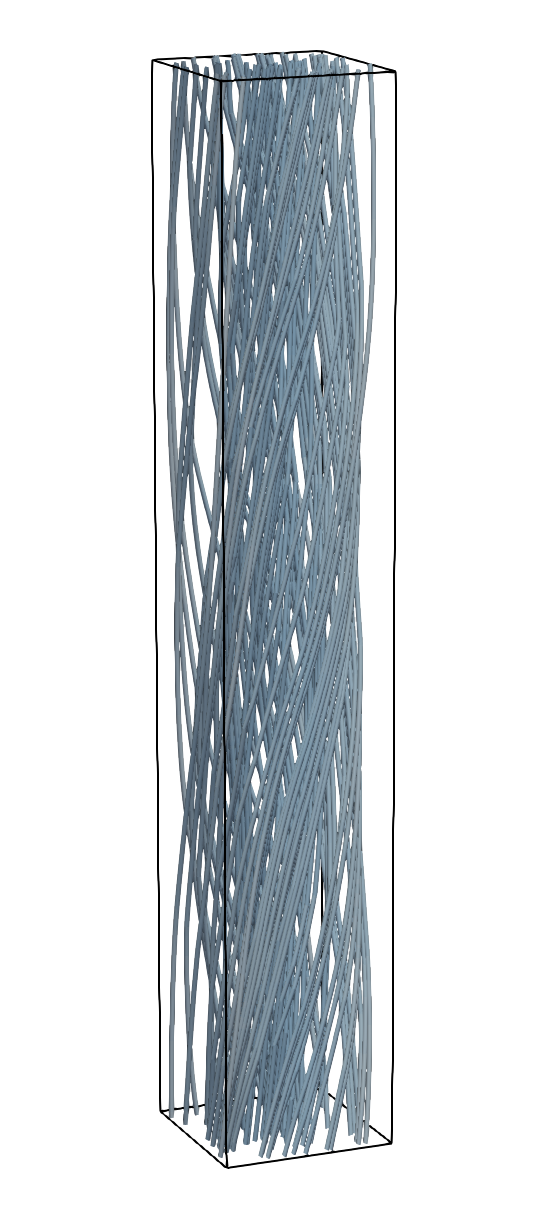}
    &\hspace{-0.3cm} \includegraphicswithlegend[0.86]{.23\textwidth}{images/lm-e3-positive-t=10000}{3.1}{0.97} \\ \hline
     \vspace{-5cm} Projection-based scheme \eqref{eq:mixed-scheme} with local constraints
    &  \includegraphics[scale=0.135]{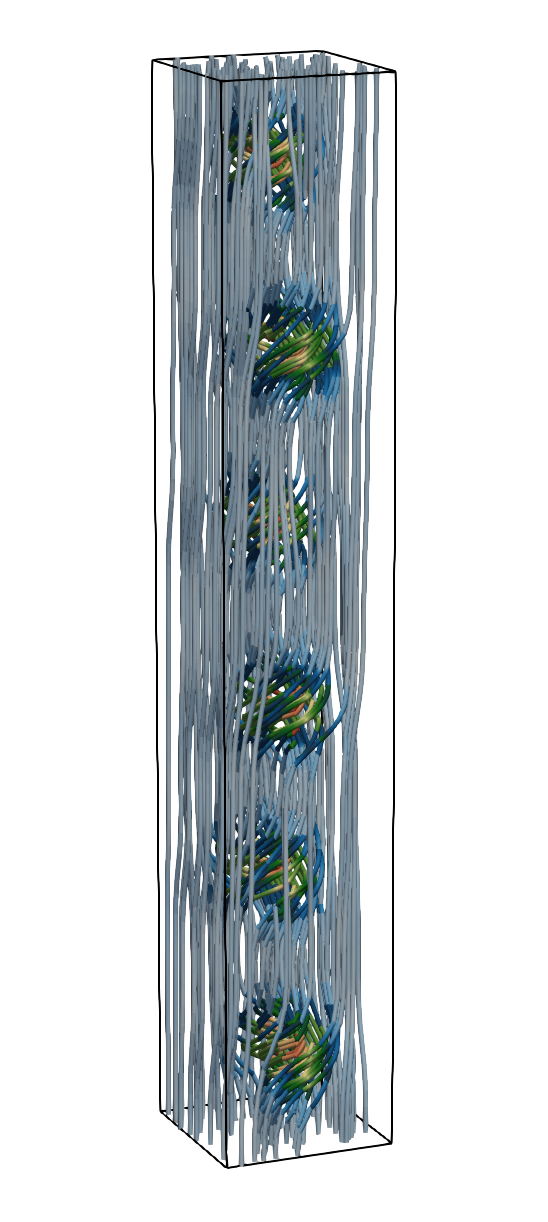}
    &\hspace{-0.3cm} \includegraphics[scale=0.135]{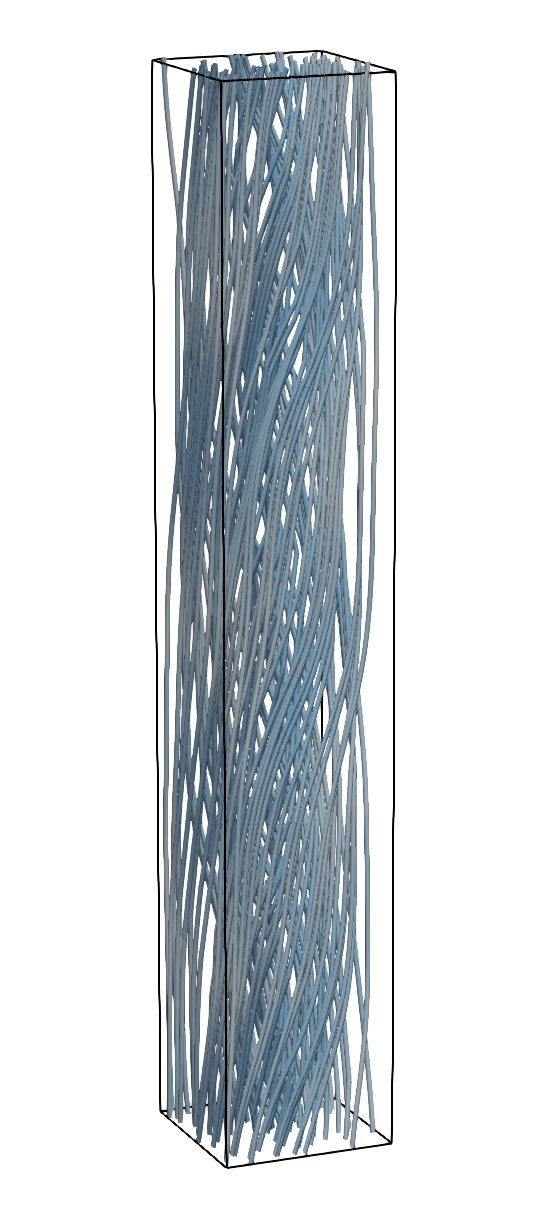}
    &\hspace{-0.3cm} \includegraphicswithlegend[0.86]{.23\textwidth}{images/mixed-e3-positive-t=10000}{3.1}{0.97} \\ \hline
\end{tabular}
\caption{\emph{Magnetic braids} (CPH): comparison of evolution of stream tubes of the magnetic field under different topological constraints, colored by magnetic field strength $\|\bm B_h\|$.}
\label{fig:e3-positive-simulation}
\end{figure}

\Cref{fig:e3-positive-eh-errors,fig:e3-positive-simulation} illustrate the evolution of the 
total energy, background energy, generalized helicity, and errors for 
the different schemes. We observe that the generalized helicity is non-zero for the CPH configuration, 
in contrast to the WS configuration considered above. This confirms that the 
generalized helicity provides an accurate and computable helicity measure for magnetic braids. 

During the relaxation process, the non-conservative scheme relaxes to the background field due to the helicity pollution. By contrast, the Lagrange multiplier scheme and the projection-based scheme both preserve
the generalized helicity, and therefore do not relax to the background field in
the same way as the non-conservative scheme. This confirms that the
generalized helicity can act as an effective topological invariant for this class
of braided fields.

\subsection{Magnetic knots: Hopf fibration}\label{subsec:knots}
We next consider the relaxation of magnetic knots, employing the Hopf fibration as the initial configuration  \cite{smietIdealRelaxationHopf2017}
\begin{equation}
    \bm{B}^{\mathrm{Hopf}}(0)
    =
    \frac{4 \sqrt{s}}{\pi\left(1+r^2\right)^3 \sqrt{\omega_1^2+\omega_2^2}}\left(\begin{array}{c}
        2\left(\omega_2 y-\omega_1 x z\right) \\
        -2\left(\omega_2 x+\omega_1 y z\right) \\
        \omega_1\left(-1+x^2+y^2-z^2\right)
    \end{array}\right),
    \label{eqn:hopf-fibre}
\end{equation}
where $\omega_1, \omega_2 \in \mathbb R$ are winding numbers, $s \ge 0$ is a scaling parameter, and $r^2 = x^2 + y^2 + z^2$. We choose $\omega_1 = 3$, $\omega_2 = 2$, $s = 1$, such that the field lines form three windings in the poloidal direction for every two in the toroidal direction, thus exhibiting a non-zero helicity. For the time stepping, we choose $\Delta t = 1$ at the beginning of $200$ time steps and then change to a larger time step $\Delta t = 100$. Again, this empirical adjustment improves the robustness of the nonlinear solver while allowing us to reach the long final time more efficiently. We choose the boundary conditions to be Dirichlet on all faces. 

\Cref{fig:hopf-simulation} presents snapshots of the relaxation process with the three schemes. The equilibria reached at $t=10000$ clearly exhibit distinct morphological features depending on the constraints enforced. This qualitative difference is further quantified by the time evolution of key physical quantities shown in \Cref{fig:hopf-eh-errors}.

\begin{figure}[h!]
\centering
\renewcommand{\arraystretch}{1.9}
\def\temptablewidth{2.9\textwidth}
\setlength{\tabcolsep}{6pt}
\begin{tabular}{p{0.16\textwidth}| c c}
\hline
Schemes & $\mathcal{E}_h$, $\mathcal{H}_h$ & Errors \\ \hline
\vspace{-3cm} Non-conservative scheme \eqref{eq:nonconservative-scheme} with no constraint
& \includegraphics[width=0.33\textwidth]{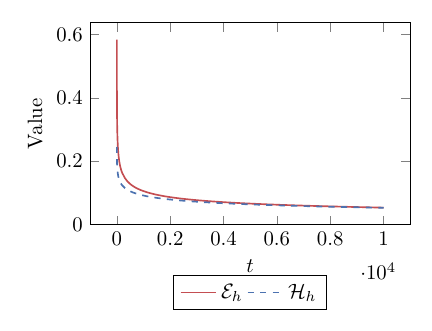}
& \includegraphics[width=0.33\textwidth]{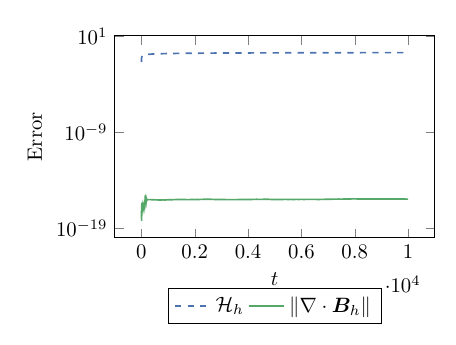} \\
\hline
\vspace{-3cm} Lagrange multiplier scheme \eqref{eqn:lm-full} with global constraint
& \includegraphics[width=0.33\textwidth]{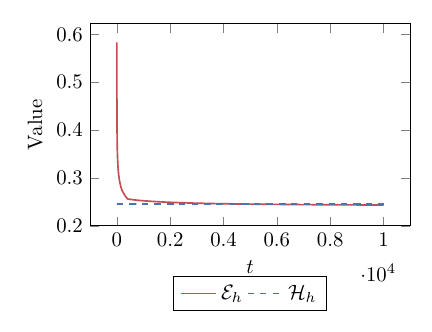}
& \includegraphics[width=0.33\textwidth]{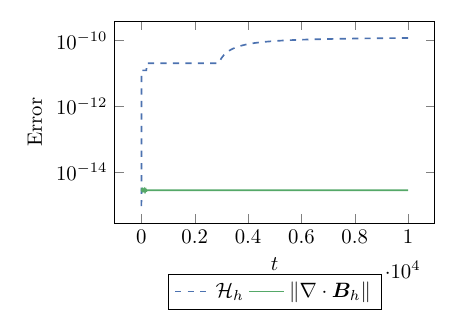} \\
\hline
\vspace{-3cm} Projection-based scheme \eqref{eq:mixed-scheme} with local constraints
& \includegraphics[width=0.33\textwidth]{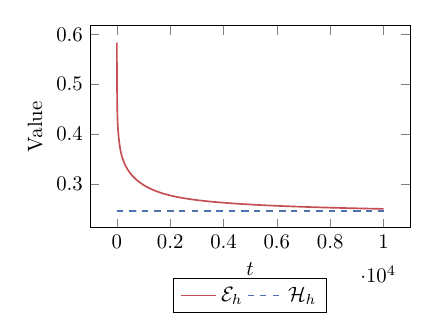}
& \includegraphics[width=0.33\textwidth]{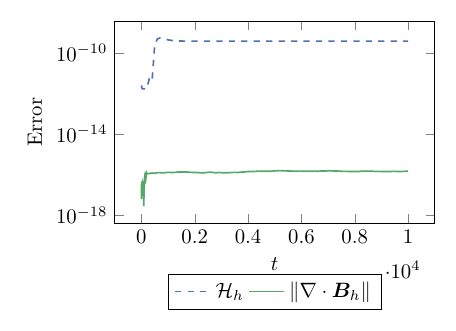}\\
\hline
\end{tabular}
\caption{Magnetic knots (Hopf fibration): evolution of energy and helicity, errors.}
\label{fig:hopf-eh-errors}
\end{figure}

\begin{figure}[h!]
\centering
\renewcommand{\arraystretch}{1.9}
\def\temptablewidth{2.9\textwidth}
\setlength{\tabcolsep}{6pt}
\begin{tabular}{p{1.89cm}<{\centering}  |c c c }
\hline
    Schemes &  $t=0$ & $t=10$ & $t=10000$ \\ \hline
  \vspace{-4.5cm} Non-conservative scheme \eqref{eq:nonconservative-scheme} with no constraint
    & \includegraphics[width=0.23\textwidth]{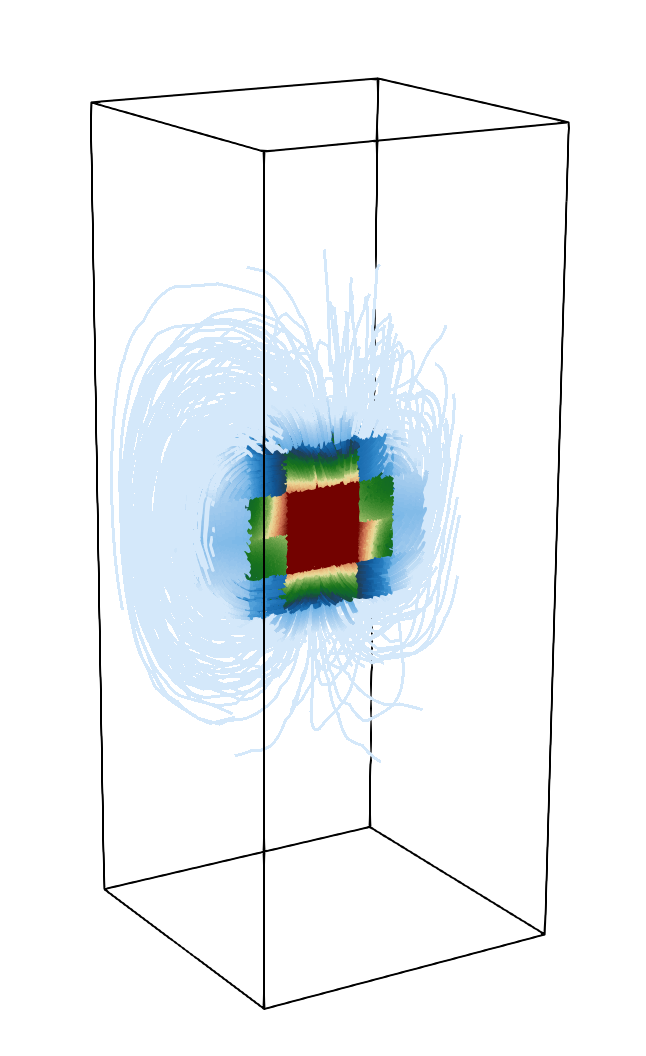}
    & \includegraphics[width=0.23\textwidth]{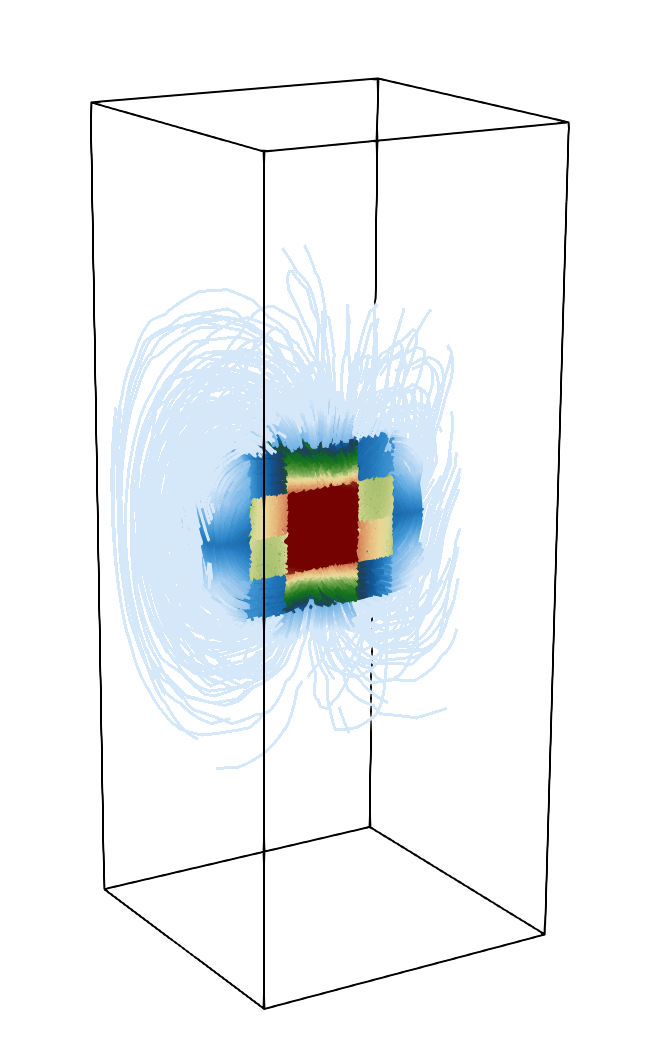}
    &  \includegraphicswithlegend[0.85]{.26\textwidth}{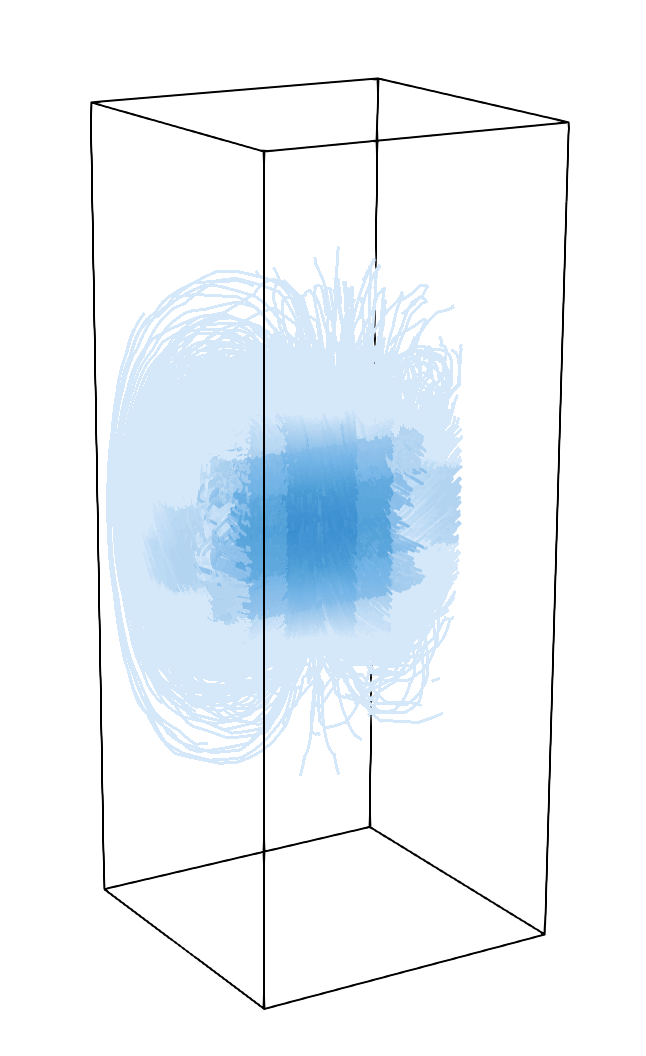}{0.33}{0}  \\
    \hline
     \vspace{-4.5cm} Lagrange multiplier scheme \eqref{eqn:lm-full} with global constraint
    & \includegraphics[width=0.23\textwidth]{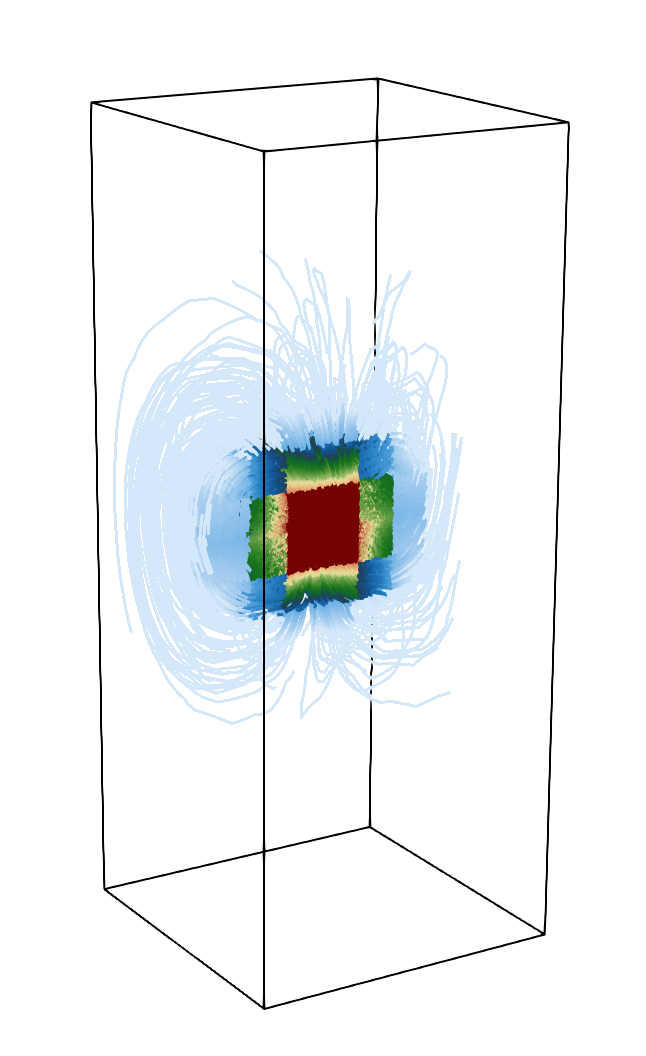}
    & \includegraphics[width=0.23\textwidth]{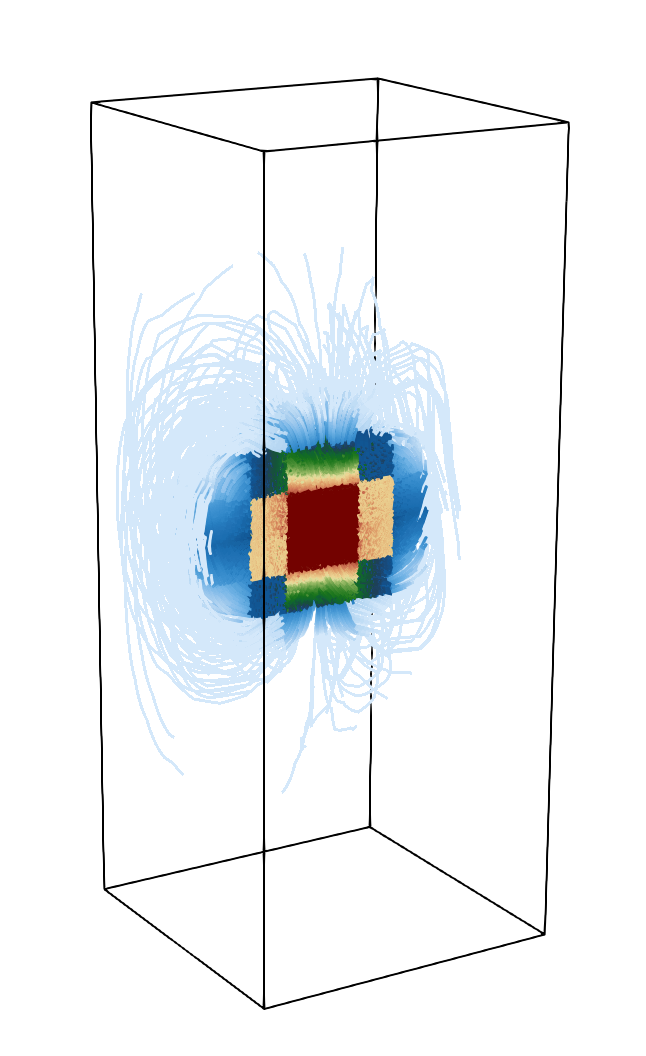}
    & \includegraphicswithlegend[0.85]{.26\textwidth}{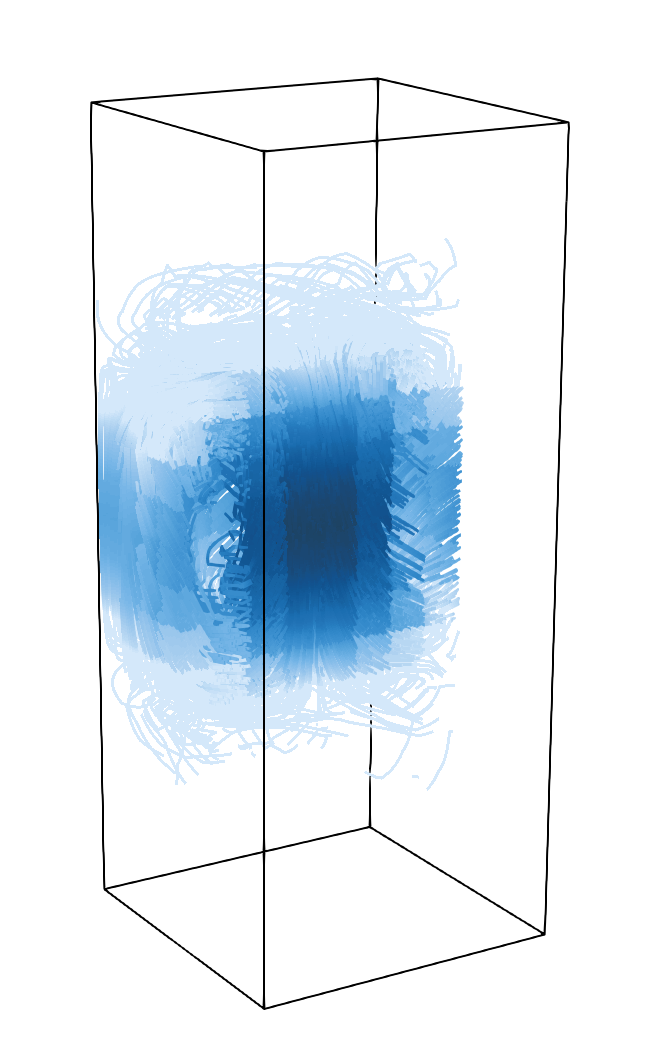}{0.33}{0} \\
    \hline
    \vspace{-4.5cm} Projection-based scheme \eqref{eq:mixed-scheme} with local constraints
    & \includegraphics[width=0.23\textwidth]{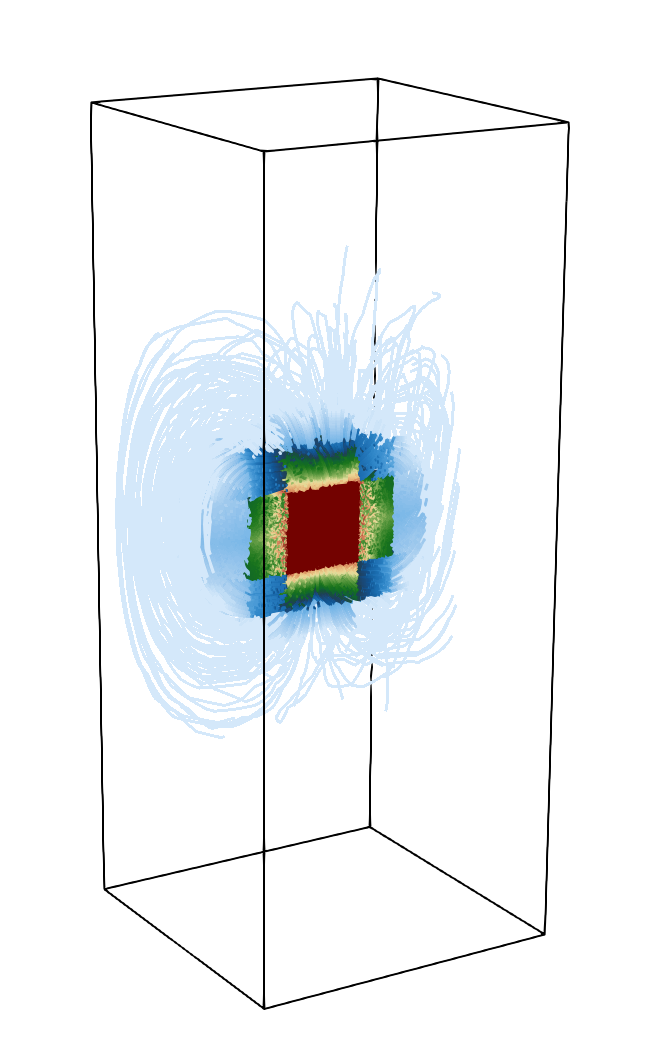}
    & \includegraphics[width=0.23\textwidth]{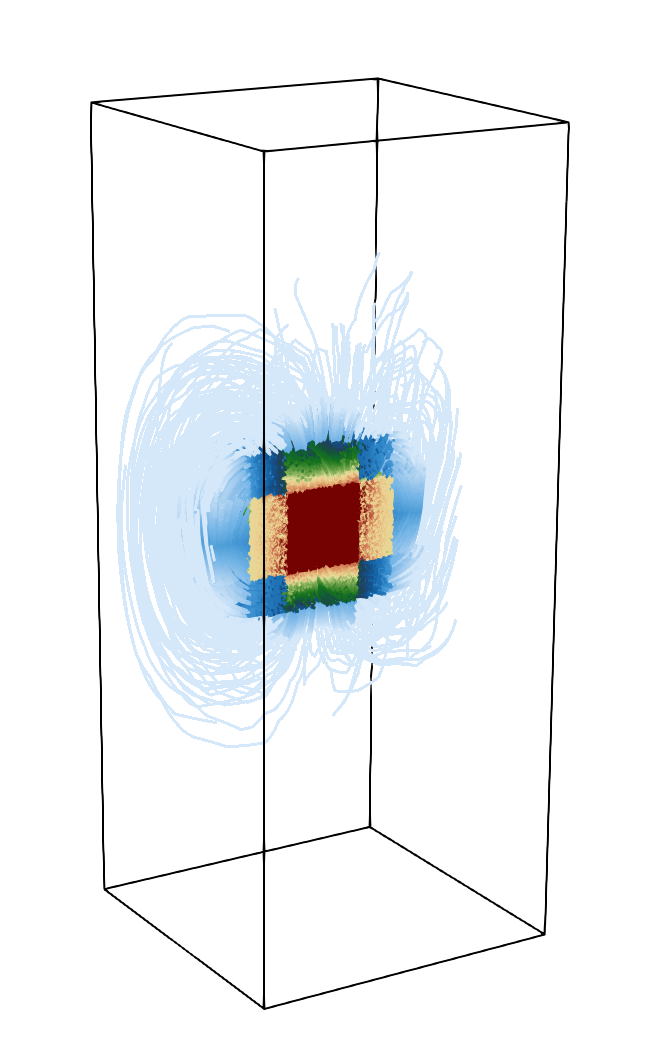}
    & \includegraphicswithlegend[0.85]{.26\textwidth}{images/mixed-hopf-t=10000}{0.33}{0}  \\ \hline
\end{tabular}
\caption{\emph{Magnetic knots}: comparison of evolution of stream tubes of the magnetic field under different topological constraints, colored by magnetic field strength $\|\bm B_h\|$.}
\label{fig:hopf-simulation}
\end{figure}

Based on these numerical results, several conclusions can be drawn. In the absence of the discrete Arnold inequality, the non-conservative scheme permits 
the magnetic energy to decay to (or very near) zero, confirming that without explicit topological protection, the magnetic field relaxes to a trivial state. 
Both the Lagrange multiplier and projection-based schemes, by contrast, converge 
to steady states with distinctly non-zero energy levels, consistent with Arnold's 
inequality, which asserts that nontrivial topology imposes a lower bound on the 
minimizable magnetic energy. The non-conservative 
scheme preserves only the discrete Gauss law, whereas both the Lagrange multiplier 
and projection-based schemes maintain the discrete Gauss law {\em and} global 
magnetic helicity to within machine precision and solver tolerances, thereby 
preserving the topological integrity of the field throughout the evolution.


\section{Local vs.~global helicity preservation: physical implications}\label{sec:discussion}
We have examined two helicity-preserving schemes for the magneto-frictional equations: one based on projection with auxiliary variables and the other based on a Lagrange multiplier approach. 
These schemes enforce different levels of helicity structure: the Lagrange
multiplier method imposes a single global constraint, whereas the projection-based
method preserves discrete local helicity on magnetically closed
subdomains. The numerical results show that this distinction is important, but
its effect depends strongly on the topology of the initial field.

This distinction is also relevant for interpreting the physical meaning of the two schemes. In ideal MHD, the magnetic flux through every material surface is conserved (Alfv\'en's flux theorem \cite{alfven1943existence}); equivalently, magnetic field lines are frozen into the flow. This is a much stronger, essentially local, constraint than the conservation of a single global helicity. It implies that field-line connectivity and the distribution of magnetic topology are transported by the flow, so that local helicities are preserved whenever the corresponding magnetically closed subdomains are advected with the plasma. 
In this sense, the projection-based scheme is closer to the ideal frozen-in structure of the magneto-frictional equations.

However, in reality, ideal MHD and its model problem \eqref{eqn:magneto-frictional} are only approximations of genuine plasma-physics systems, where diffusion of varying strengths and magnetic reconnection events occur during dynamical evolution. When diffusion and \emph{local} reconnection are present, local helicities are no longer conserved, whereas global magnetic helicity remains conserved (at least approximately, and often to high accuracy) \cite{taylor1974relaxation,pontin2016braided}.
A well-known example is Taylor relaxation, in which  small-scale reconnection events are believed to progressively rearrange magnetic topology while preserving the total helicity, thereby driving the system toward a minimum-energy state consistent with this single global invariant. This physical picture, originally proposed by Taylor, has proven remarkably successful: theoretical predictions closely match experimental observations in reverse-field pinches \cite{taylor1974relaxation}.


Solving the ideal MHD or the magneto-frictional equations with the ``wrong'' (i.e., merely global-helicity-conserving) scheme may, in fact, be closer to the real physics than the idealized equations themselves suggest. Numerical errors (relative to the true solutions of the ideal MHD or magneto-friction equations) arise from the weaker enforcement of the helicity constraint at local scales. Rather than viewing these deviations purely as undesirable artifacts, one can interpret them as mimicking physical local reconnection processes while still preserving helicity in a global, averaged sense.

This naturally raises the question: Can schemes based on the Lagrange multiplier approach be used to study Taylor relaxation and related phenomena? More broadly, can controlled numerical violations at small scales serve as a meaningful surrogate for unresolved physical reconnection processes, while preserving the correct global invariants? In short, the key issue is \emph{whether solving the ideal MHD equations with such ``incorrect'' algorithms can, in practice, yield physically more realistic solutions}.

In this work we do not provide a definitive answer. Instead, we view these results as opening a promising direction for further investigation. We also note that Faraco et al.~\cite{faraco2024magnetic} recently proposed a helicity conservation condition designed to better match the real physics of Taylor relaxation. Inspired by this and related work, a deeper understanding of the interplay between numerical constraint enforcement, topological evolution, and physical reconnection processes may yield new insights into relaxation theory and guide the design of structure-preserving numerical schemes for magnetohydrodynamics.

\section{Conclusion}\label{sec:conclusion}
In this work, we have investigated magnetic relaxation under three distinct
levels of helicity constraint: unconstrained evolution, global helicity
conservation, and local helicity conservation. By systematically comparing three finite element discretizations that enforce these constraints at the discrete
level, we have shown that the degree to which helicity is preserved numerically
can play an important role in determining the character of the relaxed magnetic
state.

The numerical results reveal three regimes. First, for helicity-carrying closed
fields, such as magnetic knots, helicity preservation imposes a nontrivial
constraint on the relaxation and changes the attainable steady state. Second,
for the WS braid, whose helicity vanishes, the corresponding
helicity constraint does not provide a positive lower bound on the magnetic
energy. In this case, the Lagrange multiplier and projection-based schemes may
relax toward qualitatively similar large-time states, although their transient
pathways and relaxation rates can differ. This illustrates a possible limitation of
helicity-based invariants for zero-helicity braids: nontrivial field-line
topology may remain invisible to the conserved helicity.

Third, for the CPH braid, the generalized helicity is nonzero and provides a
meaningful measure of the braided topology. In this case, both the Lagrange
multiplier and the projection-based schemes preserve this invariant and relax to
nonzero steady states. Thus, the main conclusion is more nuanced: helicity and
generalized helicity can provide effective topological barriers when they are
nonzero, but they do not form a complete description of magnetic topology. In
particular, zero-helicity braids require finer invariants or field-line-based
diagnostics. The decisive issue is therefore not only the level at which helicity
is preserved, but also whether the preserved helicity-type invariant is
informative for the topology of the field under consideration.

This observation also motivates the exploration of higher-order topological
invariants, such as those conjectured in
\cite{ArnoldTopologicalMethodsHydrodynamics2021}, as well as more refined
diagnostics such as field-line helicity distributions, field-line mappings and
topological entropy. Theoretical guarantees for the evolution of broad classes
of topologically nontrivial fields are available at the continuous level
\cite{freedman1988note}, but incorporating such information into practical
Eulerian finite element discretizations remains largely open.

This also raises an intriguing question: might the Lagrange multiplier approach better capture the essential physics of Taylor relaxation? This in turn raises a question on the best use of structure preservation in numerical PDEs: in real physical systems, which invariants should be rigidly preserved, and which may, or even should, be allowed to evolve or break on discretization?

Looking ahead, we plan to pursue several directions. Algorithmically, we intend
to develop efficient solvers for all three methods, including the application of
decoupling techniques and tailored preconditioners. Another interesting direction
is to study the interaction between spatial resolution, polynomial degree, and
structure preservation. One motivation for structure-preserving discretizations is to capture the
correct physics by enforcing the relevant invariants at the discrete level. At
the same time, structure preservation is also an efficiency question: a method
that builds in the correct topological constraints may reproduce physically
meaningful behaviour on much coarser meshes than a method that relies only on
resolution. In particular, it would be useful to
quantify how fine a mesh, or how high a polynomial degree, is needed for a
non-structure-preserving method to reproduce, to a prescribed tolerance, the
behaviour of a helicity-preserving discretization. Such a comparison would help
distinguish the effect of increasing approximation accuracy from the effect of
enforcing the relevant conservation law exactly at the discrete level. From the
physics perspective, we aim to apply these numerical schemes to study magnetic
braids in the context of solar coronal heating
\cite{pontin2016braided,Candelaresi2015}.

\section*{Code availability}
The simulations in \Cref{sec:Numerical-experiment} were implemented in {Firedrake} \cite{FiredrakeUserManual} and {PETSc} \cite{petsc-user-ref};
MUMPS~\cite{amestoy2001} was used to solve the linear systems.
The code used to generate the numerical results and all {Firedrake} components have been archived on Zenodo \cite{glrelax}
\section*{Acknowledgments}
We would like to thank Simon Candelaresi, Ralf Hiptmair, Gunnar Hornig,
Buyang Li, Shipeng Mao, Anthony Yeates and Enrico Zampa for helpful discussions. 

\appendix
\renewcommand*{\appendixname}{}

\section{Woltjer's variational principle}\label{app:vp}
Following Woltjer \cite{woltjer1958theorem}, we solve the following variational problem subject to the global helicity $\mathcal{H}$, viewing both quantities as functions of $\bm A$,
\begin{equation}
    \frac{\delta}{\delta \bm A} \left(\mathcal{E} - \alpha_0 \mathcal{H}\right) = 0.
\end{equation}
where $\alpha_0$ is a Lagrange multiplier to enforce the global helicity conservation $\mathcal{H}$.
Direct computation yields that
\begin{align*}
    \frac{\delta \mathcal{E}}{\delta \bm A} &= 2\int_{\Omega} \bm B\cdot \nabla\times \delta \bm A\  \d x,\\
    &=2\int_{\Omega} (\delta \bm A\cdot \nabla\times \bm B - \nabla\cdot [\bm B\times \delta \bm A])\ \d x,\\
    &=2\int_{\Omega} \delta \bm A \cdot \bm j\ \d x - 2\int_{\partial \Omega}\bm B\cdot (\delta \bm A\times \bm n)\ \d x, \\
    &=2\int_{\Omega} \delta \bm A\cdot\bm j\ \d x.
\end{align*}
Then, we compute
\begin{align*}
\frac{\delta \mathcal{H}}{\delta \bm A}
&= \int_\Omega \delta \bm{A} \cdot\bm{B}\ \d x 
  + \int_\Omega \bm{A} \cdot \delta \bm{B}\ \d x \\
&= \int_\Omega \delta \bm{A}\cdot\bm{B}\ \d x
  + \int_\Omega \bm{A}\cdot (\nabla\times \delta \bm{A})\ \d x \\
&= \int_\Omega \delta \bm{A}\cdot\bm{B}\ \d x
  + \int_\Omega \delta \bm{A}\cdot (\nabla\times \bm{A})\ \d x\\
  &\qquad - \int_{\partial \Omega} \bm{n}\cdot (\bm{A}\times \delta \bm{A})\ \d x \\
&= 2 \int_\Omega \delta \bm{A}\cdot\bm{B}\ \d x
   - \int_{\partial \Omega} \bm{A}\cdot (\delta \bm{A}\times \bm{n})\ \d x \\
&= 2 \int_\Omega \delta \bm{A}\cdot\bm{B}\ \d x .
\end{align*}
The minimum-energy state must satisfy
\begin{equation}\label{appB:mininal-state}
    \int_\Omega \delta \bm A \cdot(\bm j - \alpha_0 \bm B) \ \d x= 0.
\end{equation}
Therefore, we have $\bm j = \alpha_0 \bm B$, where $\alpha_0$ is a constant. 


\bibliographystyle{siam}
\bibliography{ref}

@article{finn1985magnetic,
  title={Magnetic helicity: what is it and what is it good for?},
  author={Finn, John M and Antonsen Jr, Thomas M},
  journal={Comments on Plasma Physics and Controlled Fusion},
  volume={9},
  number={3},
  pages={111--126},
  year={1985}
}

@article{berger1984topological,
  title   = {The topological properties of magnetic helicity},
  author  = {Berger, Mitchell A. and Field, George B.},
  journal = {Journal of Fluid Mechanics},
  volume  = {147},
  pages   = {133--148},
  year    = {1984},
  publisher = {Cambridge University Press},
  doi     = {10.1017/S0022112084002019}
}

@misc{glrelax,
  key    = {zenodo/Zenodo-20260708.2},
  author = {P. E. Farrell and M. He and K. Hu and G. Zhang},
  title  = {Software used in `Global and local helicity-preservation in the finite element discretisation of magnetic relaxation'},
  year   = {2026},
  doi    = {10.5281/zenodo.21263175},
  url    = {https://doi.org/10.5281/zenodo.21263175},
  note   = {\href{https://doi.org/10.5281/zenodo.21263175}
            {10.5281/zenodo.21263175}}
}

@article{faraco2024magnetic,
  title={Magnetic helicity, weak solutions and relaxation of ideal {MHD}},
  author={Faraco, Daniel and Lindberg, Sauli and Sz{\'e}kelyhidi Jr, L{\'a}szl{\'o}},
  journal={Communications on Pure and Applied Mathematics},
  volume={77},
  number={4},
  pages={2387--2412},
  year={2024},
  publisher={Wiley Online Library}
}

@article{freedman1988note,
  title={A note on topology and magnetic energy in incompressible perfectly conducting fluids},
  author={Freedman, Michael H},
  journal={Journal of Fluid Mechanics},
  volume={194},
  pages={549--551},
  year={1988},
  publisher={Cambridge University Press}
}

@article{da2025error,
  title={Error estimates for a helicity-preserving finite element discretisation of an incompressible magnetohydrodynamics system},
  author={Da Veiga, Lourenco Beirao and Hu, Kaibo and Mascotto, Lorenzo},
  journal={ESAIM: Mathematical Modelling and Numerical Analysis},
  volume={59},
  number={2},
  pages={1075--1094},
  year={2025},
  publisher={EDP Sciences}
}

@article{ma2016robust,
  title={{Robust preconditioners for incompressible MHD models}},
  author={Ma, Yicong and Hu, Kaibo and Hu, Xiaozhe and Xu, Jinchao},
  journal={Journal of Computational Physics},
  volume={316},
  pages={721--746},
  year={2016},
  publisher={Elsevier}
}

@article{massey1998higher,
  title={Higher order linking numbers},
  author={Massey, William S},
  journal={Journal of Knot Theory and Its Ramifications},
  volume={7},
  pages={393--414},
  year={1998},
  publisher={World Scientific}
}

@incollection{yeates2019magnetohydrodynamic,
  title={Magnetohydrodynamic relaxation theory},
  author={Yeates, Anthony R},
  booktitle={Topics in Magnetohydrodynamic Topology, Reconnection and Stability Theory},
  pages={117--143},
  year={2019},
  publisher={Springer}
}

@article{moffatt2015magnetic,
  title={Magnetic relaxation and the {Taylor} conjecture},
  author={Moffatt, HK},
  journal={Journal of Plasma Physics},
  volume={81},
  number={6},
  pages={905810608},
  year={2015},
  publisher={Cambridge University Press}
}

@article{tonnon2024semi,
  title={Semi-{L}agrangian finite element exterior calculus for incompressible flows},
  author={Tonnon, Wouter and Hiptmair, Ralf},
  journal={Advances in Computational Mathematics},
  volume={50},
  number={1},
  pages={11},
  year={2024},
  publisher={Springer}
}

@article{saad1993flexible,
  title={A flexible inner-outer preconditioned {GMRES} algorithm},
  author={Saad, Youcef},
  journal={SIAM Journal on Scientific Computing},
  volume={14},
  number={2},
  pages={461--469},
  year={1993},
  publisher={SIAM}
}

@article{YeatesHornig2013,
  author = {Yeates, A. R. and Hornig, G.},
  title = {Unique topological characterization of braided magnetic fields},
  journal = {Physics of Plasmas},
  volume = {20},
  number = {1},
  pages = {012102},
  year = {2013},
  doi = {10.1063/1.4817535}
}

@article{YeatesWilmotSmithHornig2010,
  author = {Yeates, A. R. and Wilmot-Smith, A. L. and Hornig, G.},
  title = {Topological Constraints on Magnetic Relaxation},
  journal = {Physical Review Letters},
  volume = {105},
  number = {8},
  pages = {085002},
  year = {2010},
  doi = {10.1103/PhysRevLett.105.085002}
}

@article{pontin2016braided,
  title={Braided magnetic fields: equilibria, relaxation and heating},
  author={Pontin, DI and Candelaresi, Simon and Russell, AJB and Hornig, Gunnar},
  journal={Plasma Physics and Controlled Fusion},
  volume={58},
  number={5},
  pages={054008},
  year={2016},
  publisher={IOP Publishing}
}

@article{Candelaresi2015,
  author    = {Candelaresi, S. and Pontin, D. I. and Hornig, G.},
  journal   = {The Astrophysical Journal},
  title     = {MAGNETIC FIELD RELAXATION AND CURRENT SHEETS IN AN IDEAL PLASMA},
  year      = {2015},
  issn      = {1538-4357},
  month     = jul,
  number    = {2},
  pages     = {134},
  volume    = {808},
  doi       = {10.1088/0004-637x/808/2/134},
  publisher = {American Astronomical Society},
}

@article{cheng2020new,
  title={A new {L}agrange multiplier approach for gradient flows},
  author={Cheng, Qing and Liu, Chun and Shen, Jie},
  journal={Computer Methods in Applied Mechanics and Engineering},
  volume={367},
  pages={113070},
  year={2020},
  publisher={Elsevier}
}

@article{cheng2020global,
  title={Global constraints preserving scalar auxiliary variable schemes for gradient flows},
  author={Cheng, Qing and Shen, Jie},
  journal={SIAM Journal on Scientific Computing},
  volume={42},
  number={4},
  pages={A2489--A2513},
  year={2020},
  publisher={SIAM}
}

@article{guo2023mass,
  title={Mass-, energy-, and momentum-preserving spectral scheme for {Klein}--{Gordon}--{Schr{\"o}dinger} system on infinite domains},
  author={Guo, Shimin and Mei, Liquan and Yan, Wenjing and Li, Ying},
  journal={SIAM Journal on Scientific Computing},
  volume={45},
  number={2},
  pages={B200--B230},
  year={2023},
  publisher={SIAM}
}

@article{garcke2025structure_twophase,
  title={Structure-preserving parametric finite element methods for two-phase {S}tokes flow based on {L}agrange multiplier approaches},
  author={Garcke, Harald and Trautwein, Dennis and Zhang, Ganghui},
  journal={arXiv preprint arXiv:2508.12326},
  year={2025}
}

@article{garcke2025structure,
  title={Structure-Preserving Parametric Finite Element Method for Surface Diffusion Based on {L}agrange Multiplier Approaches},
  author={Garcke, Harald and Jiang, Wei and Su, Chunmei and Zhang, Ganghui},
  journal={SIAM Journal on Scientific Computing},
  volume={47},
  number={3},
  pages={A1983--A2011},
  year={2025},
  publisher={SIAM}
}

@article{zhangMassKineticEnergy2022,
  title = {A Mass-, Kinetic Energy- and Helicity-Conserving Mimetic Dual-Field Discretization for Three-Dimensional Incompressible {{Navier-Stokes}} Equations, Part {{I}}: {{Periodic}} Domains},
  shorttitle = {A Mass-, Kinetic Energy- and Helicity-Conserving Mimetic Dual-Field Discretization for Three-Dimensional Incompressible {{Navier-Stokes}} Equations, Part {{I}}},
  author = {Zhang, Yi and Palha, Artur and Gerritsma, Marc and Rebholz, Leo G.},
  year = {2022},
  journal = {Journal of Computational Physics},
  volume = {451},
  pages = {110868},
  issn = {00219991},
  doi = {10.1016/j.jcp.2021.110868},
  urldate = {2025-06-08},
  langid = {english}
}

@article{zhangMEEVCDiscretizationTwodimensional2024,
  title = {A {{MEEVC}} Discretization for Two-Dimensional Incompressible {{Navier-Stokes}} Equations with General Boundary Conditions},
  author = {Zhang, Yi and Palha, Artur and Gerritsma, Marc and Yao, Qinghe},
  year = {2024},
  journal = {Journal of Computational Physics},
  volume = {510},
  pages = {113080},
  publisher = {Elsevier BV},
  issn = {0021-9991},
  doi = {10.1016/j.jcp.2024.113080},
  urldate = {2025-07-17},
  copyright = {https://www.elsevier.com/tdm/userlicense/1.0/},
  langid = {english}
}

@article{maoIncompressibilityDivB0Preserving2025,
  title = {An Incompressibility, {{divB}}=0 Preserving, Current Density, Helicity, Energy-Conserving Finite Element Method for Incompressible {{MHD}} Systems},
  author = {Mao, Shipeng and Xi, Ruijie},
  year = {2025},
  journal = {Journal of Computational Physics},
  volume = {538},
  pages = {114130},
  publisher = {Elsevier BV},
  issn = {0021-9991},
  doi = {10.1016/j.jcp.2025.114130},
  urldate = {2025-07-10},
  copyright = {https://www.elsevier.com/tdm/userlicense/1.0/},
  langid = {english}
}

@article{BlickhanMRXdifferentiable3D2025,
  title = {{{MRX}}: {{A}} Differentiable {{3D MHD}} Equilibrium Solver without Nested Flux Surfaces},
  shorttitle = {{{MRX}}},
  author = {Blickhan, Tobias and Stratton, Julianne and Kaptanoglu, Alan A.},
  year = {2025},
  number = {arXiv:2510.26986},
  eprint = {2510.26986},
  primaryclass = {physics},
  publisher = {arXiv},
  doi = {10.48550/arXiv.2510.26986},
  urldate = {2025-11-14},
  archiveprefix = {arXiv},
  langid = {english},
  journal = {arXiv preprint arXiv:2510.26986},
}

@article{he2025helicity,
  title={Helicity-preserving finite element discretization for magnetic relaxation},
  author={He, M and Farrell, PE and Hu, K and Andrews, B},
  journal={SIAM Journal on Scientific Computing},
  year={2025},
doi = {10.1137/25M1727540},
volume = {48},
issue = {2},
pages = {B165--B183},
}

@article{alfven1943existence,
  title   = {On the existence of electromagnetic-hydrodynamic waves},
  author  = {Alfv{\'e}n, Hannes},
  journal = {Arkiv for matematik, astronomi och fysik},
  volume  = {29},
  pages   = {1--7},
  year    = {1943}
}

@article{amestoy2001,
  author  = {P. R. Amestoy and I. S. Duff and J. Koster and J.-Y. L'Excellent},
  title   = {A fully asynchronous multifrontal solver using distributed dynamic scheduling},
  journal = {SIAM Journal on Matrix Analysis and Applications},
  volume  = {23},
  number  = {1},
  year    = {2001},
  pages   = {15--41}
}

@article{andrews2024enforcing,
  title   = {Enforcing conservation laws and dissipation inequalities numerically via auxiliary variables},
  author  = {Andrews, Boris D and Farrell, Patrick E},
  year    = {2025},
  Journal = {SIAM Journal on Scientific Computing},
  doi = {10.1137/25M1756673},
  volume = {47},
  issue = {6},
}

@article{arnold1974asymptotic,
  title     = {The asymptotic {H}opf invariant and its applications},
  author    = {Arnold, Vladimir I},
  journal   = {Vladimir I. Arnold-Collected Works: Hydrodynamics, Bifurcation Theory, and Algebraic Geometry 1965-1972},
  pages     = {357--375},
  year      = {1974},
  publisher = {Springer}
}

@article{arnoldFiniteElementExterior2006,
  title   = {Finite Element Exterior Calculus, Homological Techniques, and Applications},
  author  = {Arnold, Douglas N. and Falk, Richard S. and Winther, Ragnar},
  year    = {2006},
  journal = {Acta Numerica},
  volume  = {15},
  pages   = {1--155},
  issn    = {0962-4929, 1474-0508},
  doi     = {10.1017/S0962492906210018},
  urldate = {2023-11-09},
  langid  = {english}
}

@article{arnoldFiniteElementExterior2010,
  title      = {Finite Element Exterior Calculus: from {{Hodge}} Theory to Numerical Stability},
  shorttitle = {Finite Element Exterior Calculus},
  author     = {Arnold, Douglas and Falk, Richard and Winther, Ragnar},
  year       = {2010},
  journal    = {Bulletin of the American Mathematical Society},
  volume     = {47},
  number     = {2},
  pages      = {281--354},
  issn       = {0273-0979, 1088-9485},
  doi        = {10.1090/S0273-0979-10-01278-4},
  urldate    = {2023-11-09},
  langid     = {english}
}

@book{ArnoldFiniteElementExterior2018,
  title     = {Finite {{Element Exterior Calculus}}},
  author    = {Arnold, Douglas N.},
  year      = {2018},
  publisher = {{Society for Industrial and Applied Mathematics}},
  address   = {Philadelphia, PA},
  doi       = {10.1137/1.9781611975543},
  urldate   = {2024-01-06},
  isbn      = {978-1-61197-553-6 978-1-61197-554-3},
  langid    = {english}
}

@book{ArnoldTopologicalMethodsHydrodynamics2021,
  title     = {Topological {{Methods}} in {{Hydrodynamics}}},
  author    = {Arnold, Vladimir I. and Khesin, Boris A.},
  year      = {2021},
  series    = {Applied {{Mathematical Sciences}}},
  volume    = {125},
  publisher = {Springer International Publishing},
  address   = {Cham},
  doi       = {10.1007/978-3-030-74278-2},
  urldate   = {2024-01-06},
  isbn      = {978-3-030-74277-5 978-3-030-74278-2},
  langid    = {english}
}

@techreport{bevir1980relaxation,
  title       = {Relaxation, flux consumption and quasi steady state pinches},
  author      = {Bevir, MK and Gray, JW},
  year        = {1980},
  institution = {Los Alamos National Laboratory},
  number      = {LA-8944-C}
}

@article{brezzi1985two,
  title     = {Two families of mixed finite elements for second order elliptic problems},
  author    = {Brezzi, Franco and Douglas, Jim and Marini, L. Donatella},
  journal   = {Numerische Mathematik},
  volume    = {47},
  pages     = {217--235},
  year      = {1985},
  publisher = {Springer}
}

@article{candelaresiMimeticMethodsLagrangian2014a,
 title={Mimetic methods for {L}agrangian relaxation of magnetic fields},
  author={Candelaresi, Simon and Pontin, David and Hornig, Gunnar},
  journal={SIAM Journal on Scientific Computing},
  volume={36},
  number={6},
  pages={B952--B968},
  year={2014},
  publisher={SIAM}
}

@article{Chodura3DcodeMHD1981,
  title     = {A {{3D}} Code for {{MHD}} Equilibrium and Stability},
  author    = {Chodura, R. and Schl{\"u}ter, A.},
  year      = {1981},
  journal   = {Journal of Computational Physics},
  volume    = {41},
  number    = {1},
  pages     = {68--88},
  issn      = {00219991},
  doi       = {10.1016/0021-9991(81)90080-2},
  urldate   = {2024-08-29},
  copyright = {https://www.elsevier.com/tdm/userlicense/1.0/},
  langid    = {english}
}

@article{craig1986dynamic,
  title   = {A dynamic relaxation technique for determining the structure and stability of coronal magnetic fields},
  author  = {Craig, I. J. D. and Sneyd, A. D.},
  journal = {The Astrophysical Journal},
  volume  = {311},
  pages   = {451--459},
  year    = {1986}
}

@article{craig2005parker,
  title     = {The {Parker} problem and the theory of coronal heating},
  author    = {Craig, I. J. D. and Sneyd, A. D.},
  journal   = {Solar Physics},
  volume    = {232},
  pages     = {41--62},
  year      = {2005},
  publisher = {Springer}
}

@article{craig2014current,
  title     = {Current singularities in line-tied three-dimensional magnetic fields},
  author    = {Craig, I. J. D. and Pontin, DI},
  journal   = {The Astrophysical Journal},
  volume    = {788},
  number    = {2},
  pages     = {177},
  year      = {2014},
  publisher = {IOP Publishing}
}

@manual{FiredrakeUserManual,
  title        = {Firedrake User Manual},
  author       = {David A. Ham and Paul H. J. Kelly and Lawrence Mitchell and Colin J. Cotter and Robert C. Kirby and Koki Sagiyama and Nacime Bouziani and Sophia Vorderwuelbecke and Thomas J. Gregory and Jack Betteridge and Daniel R. Shapero and Reuben W. Nixon-Hill and Connor J. Ward and Patrick E. Farrell and Pablo D. Brubeck and India Marsden and Thomas H. Gibson and Miklós Homolya and Tianjiao Sun and Andrew T. T. McRae and Fabio Luporini and Alastair Gregory and Michael Lange and Simon W. Funke and Florian Rathgeber and Gheorghe-Teodor Bercea and Graham R. Markall},
  organization = {Imperial College London and University of Oxford and Baylor University and University of Washington},
  edition      = {First edition},
  year         = {2023},
  month        = {5},
  doi          = {10.25561/104839}
}

@article{gawlikFiniteElementMethod2022,
  title   = {A Finite Element Method for {{MHD}} That Preserves Energy, Cross-Helicity, Magnetic Helicity, Incompressibility, and Div {{B}} = 0},
  author  = {Gawlik, Evan S. and {Gay-Balmaz}, Fran{\c c}ois},
  year    = {2022},
  journal = {Journal of Computational Physics},
  volume  = {450},
  pages   = {110847},
  issn    = {00219991},
  doi     = {10.1016/j.jcp.2021.110847},
  urldate = {2024-01-25},
  langid  = {english}
}

@article{huHelicityconservativeFiniteElement2021,
  title   = {Helicity-Conservative Finite Element Discretization for Incompressible {{MHD}} Systems},
  author  = {Hu, Kaibo and Lee, Young-Ju and Xu, Jinchao},
  year    = {2021},
  journal = {Journal of Computational Physics},
  volume  = {436},
  pages   = {110284},
  issn    = {00219991},
  doi     = {10.1016/j.jcp.2021.110284},
  urldate = {2023-12-24},
  langid  = {english}
}

@article{LaakmannStructurepreservinghelicityconservingfinite2023,
  title   = {Structure-Preserving and Helicity-Conserving Finite Element Approximations and Preconditioning for the {{Hall MHD}} Equations},
  author  = {Laakmann, Fabian and Hu, Kaibo and Farrell, Patrick E.},
  year    = {2023},
  journal = {Journal of Computational Physics},
  volume  = {492},
  pages   = {112410},
  issn    = {00219991},
  doi     = {10.1016/j.jcp.2023.112410},
  urldate = {2024-01-07},
  langid  = {english}
}

@article{longbottom1998magnetic,
  title     = {Magnetic flux braiding: force-free equilibria and current sheets},
  author    = {Longbottom, AW and Rickard, GJ and Craig, I. J. D. and Sneyd, A. D.},
  journal   = {The Astrophysical Journal},
  volume    = {500},
  number    = {1},
  pages     = {471},
  year      = {1998},
  publisher = {IOP Publishing}
}

@article{hu2019structure,
  title={{Structure-preserving finite element methods for stationary MHD models}},
  author={Hu, Kaibo and Xu, Jinchao},
  journal={Mathematics of Computation},
  volume={88},
  number={316},
  pages={553--581},
  year={2019}
}

@article{hu2017stable,
  title={{Stable finite element methods preserving $\nabla\cdot \bm{B}=0$ exactly for MHD models}},
  author={Hu, Kaibo and Ma, Yicong and Xu, Jinchao},
  journal={Numerische Mathematik},
  volume={135},
  number={2},
  pages={371--396},
  year={2017},
  publisher={Springer}
}

@article{nedelec1-0,
  author  = {N\'ed\'elec, Jean-Claude},
  title   = {Mixed finite elements in \(\mathbb{R}^3\)},
  journal = {Numerische Mathematik},
  volume  = {35},
  number  = {3},
  year    = {1980},
  doi     = {10.1007/BF01396415},
  pages   = {{315--341}}
}

@techreport{petsc-user-ref,
  author      = {Satish Balay and Shrirang Abhyankar and Mark~F. Adams and Steven Benson and Jed
                 Brown and Peter Brune and Kris Buschelman and Emil Constantinescu and Lisandro
                 Dalcin and Alp Dener and Victor Eijkhout and Jacob Faibussowitsch and William~D.
                 Gropp and V\'{a}clav Hapla and Tobin Isaac and Pierre Jolivet and Dmitry Karpeev
                 and Dinesh Kaushik and Matthew~G. Knepley and Fande Kong and Scott Kruger and
                 Dave~A. May and Lois Curfman McInnes and Richard Tran Mills and Lawrence Mitchell
                 and Todd Munson and Jose~E. Roman and Karl Rupp and Patrick Sanan and Jason Sarich
                 and Barry~F. Smith and Hansol Suh and Stefano Zampini and Hong Zhang and Hong Zhang
                 and Junchao Zhang},
  title       = {{PETSc/TAO} Users Manual},
  institution = {Argonne National Laboratory},
  number      = {ANL-21/39 - Revision 3.22},
  doi         = {10.2172/2205494},
  year        = {2024}
}

@inproceedings{raviart2006mixed,
  title        = {A mixed finite element method for 2-nd order elliptic problems},
  author       = {Raviart, Pierre-Arnaud and Thomas, Jean-Marie},
  booktitle    = {Mathematical Aspects of Finite Element Methods: Proceedings of the Conference Held in Rome, December 10--12, 1975},
  pages        = {292--315},
  year         = {2006},
  organization = {Springer}
}

@article{smietIdealRelaxationHopf2017,
  title         = {Ideal {{relaxation}} of the {{Hopf fibration}}},
  author        = {Smiet, Christopher Berg and Candelaresi, Simon and Bouwmeester, Dirk},
  year          = {2017},
  journal       = {Physics of Plasmas},
  volume        = {24},
  number        = {7},
  eprint        = {1610.04719},
  primaryclass  = {physics},
  pages         = {072110},
  issn          = {1070-664X, 1089-7674},
  doi           = {10.1063/1.4990076},
  urldate       = {2024-03-30},
  archiveprefix = {arXiv},
  langid        = {english}
}

@article{taylor1974relaxation,
  title     = {Relaxation of toroidal plasma and generation of reverse magnetic fields},
  author    = {Taylor, J Brian},
  journal   = {Physical Review Letters},
  volume    = {33},
  number    = {19},
  pages     = {1139},
  year      = {1974},
  publisher = {APS}
}

@article{wilmot-smith2009,
  title   = {{{MAGNETIC BRAIDING AND PARALLEL ELECTRIC FIELDS}}},
  author  = {{Wilmot-Smith}, A. L. and Hornig, G. and Pontin, D. I.},
  year    = {2009},
  journal = {The Astrophysical Journal},
  volume  = {696},
  number  = {2},
  pages   = {1339--1347},
  issn    = {0004-637X, 1538-4357},
  doi     = {10.1088/0004-637X/696/2/1339},
  urldate = {2024-05-30},
  langid  = {english}
}

@article{wilmot2009magnetic,
  title={Magnetic braiding and quasi-separatrix layers},
  author={Wilmot-Smith, AL and Hornig, G and Pontin, DI},
  journal={The Astrophysical Journal},
  volume={704},
  number={2},
  pages={1288--1295},
  year={2009},
  publisher={The American Astronomical Society}
}

@article{wilmot2009magneticparallel,
  title     = {Magnetic braiding and parallel electric fields},
  author    = {Wilmot-Smith, AL and Hornig, G and Pontin, DI},
  journal   = {The Astrophysical Journal},
  volume    = {696},
  number    = {2},
  pages     = {1339},
  year      = {2009},
  publisher = {IOP Publishing}
}

@article{woltjer1958theorem,
  title     = {A theorem on force-free magnetic fields},
  author    = {Woltjer, Lodewijk},
  journal   = {Proceedings of the National Academy of Sciences},
  volume    = {44},
  number    = {6},
  pages     = {489--491},
  year      = {1958},
  publisher = {National Acad Sciences}
}

@article{yeatesLimitationsMagnetofrictionalRelaxation2022,
  title   = {On the Limitations of Magneto-Frictional Relaxation},
  author  = {Yeates, Anthony R.},
  year    = {2022},
  journal = {Geophysical \& Astrophysical Fluid Dynamics},
  volume  = {116},
  number  = {4},
  pages   = {305--320},
  issn    = {0309-1929, 1029-0419},
  doi     = {10.1080/03091929.2021.2021197},
  urldate = {2024-07-29},
  langid  = {english}
}

@article{zhou2014variational,
  title     = {Variational integration for ideal magnetohydrodynamics with built-in advection equations},
  author    = {Zhou, Yao and Qin, Hong and Burby, Joshua W and Bhattacharjee, Amitava},
  journal   = {Physics of Plasmas},
  volume    = {21},
  number    = {10},
  year      = {2014},
  publisher = {AIP Publishing}
}

@article{zhou2016formation,
  title     = {Formation of current singularity in a topologically constrained plasma},
  author    = {Zhou, Yao and Huang, Yi-Min and Qin, Hong and Bhattacharjee, Amitava},
  journal   = {Physical Review E},
  volume    = {93},
  number    = {2},
  pages     = {023205},
  year      = {2016},
  publisher = {APS}
}

@article{zhou2017constructing,
  title     = {Constructing current singularity in a {3D} line-tied plasma},
  author    = {Zhou, Yao and Huang, Yi-Min and Qin, Hong and Bhattacharjee, Amitava},
  journal   = {The Astrophysical Journal},
  volume    = {852},
  number    = {1},
  pages     = {3},
  year      = {2017},
  publisher = {IOP Publishing}
}

@article{murphy2000,
author = {M. F. Murphy and G. H. Golub and A. J. Wathen},
title = {A note on preconditioning for indefinite linear systems},
journal = {SIAM Journal on Scientific Computing},
volume = {21},
number = {6},
pages = {1969-1972},
year = {2000},
doi = {10.1137/S1064827599355153},
}

@article{ipsen2001,
author = {I. C. F. Ipsen},
title = {A note on preconditioning nonsymmetric matrices},
journal = {SIAM Journal on Scientific Computing},
volume = {23},
number = {3},
pages = {1050--1051},
year = {2001},
doi = {10.1137/S1064827500377435},
}

\end{document}